\begin{document}

\newtheorem{lm}{Lemma}
\newtheorem{theorem}{Theorem}
\newtheorem{df}{Definition}
\newtheorem{prop}{Proposition}
\newtheorem{rem}{Remark}

\begin{center}

{\Large \bf Variety of strange
pseudohyperbolic
attractors in three-dimensional generalized H\'enon maps}\\~\\
{\bf A.S. Gonchenko, S.V. Gonchenko} \\
Lobachevsky State University of Nizhny Novgorod, Russia\\
e-mail: \textsf{agonchenko@mail.ru,  sergey.gonchenko@mail.ru} \\~\\
\end{center}

\begin{abstract}
In the present paper we focus on the problem of the existence of strange pseudohyperbolic attractors for three-dimensional diffeomorphisms. Such attractors are genuine strange attractors in that sense that each orbit in the attractor has a positive maximal Lyapunov exponents and this property is robust, i.e. it holds for all close systems. We restrict attention to the study of pseudohyperbolic attractors that contain only one fixed point.  Then we show that three-dimensional maps may have only 5 different types of such attractors, which we call the discrete Lorenz, figure-8, double-figure-8, super-figure-8, and super-Lorenz attractors. We find the first four types of attractors in three-dimensional generalized H\'enon maps of form $\bar x = y, \; \bar y = z, \; \bar z = Bx + Az + Cy + g(y,z)$,
where $A,B$ and $C$ are parameters ($B$ is the Jacobian) and $g(0,0) = g^\prime(0,0) =0$.
\end{abstract}

\begin{itemize}
\item[] Keywords: strange attractor, pseudohyperbolicity, discrete Lorenz attractor,\\ bifurcations, Lyapunov exponents
\item[]
MSC 2010: 37C05, 37D45, 37G35  \\
\end{itemize}

\section{Introduction}

The goal of this paper is to contribute to the development of
the mathematical theory of dynamical chaos, especially to that its part which relates to
the theory of strange attractors for flows of dimension $\geq 4$ and maps (dif\-feo\-mor\-phisms) of dimension $\geq 3$.
%

A significant number of fundamental results has been obtained up to now in the theory of strange attractors of two-dimensional maps and three-dimensional flows.
It is sufficient to recall
such directions in the mathematical theory of dynamical chaos as the theory of spiral chaos \cite{Sh65,Sh70,Sh86,ACT81a,ACT85}, the theory of Lorenz attractors \cite{ABS77,ABS82,Sh80},  the theory of quasiattractors \cite{H76,AfrSh83a,AfrSh83,BenCar}.
Naturally, all these results can be used for the study of chaotic dynamics of multidimensional systems, even if to go by the way of a direct generalization of these theories. For example, many considerable results in this direction were obtained in \cite{Vitolo}.

However,
as it was recently revealed, multi\-di\-men\-sio\-nal systems may possess strange attractors of new types, the so-called {\em wild hyperbolic attractors}, \cite{TS98,TS08}.
The main feature of these attractors is that they allow homoclinic tangencies (unlike hyperbolic and Lorenz attractors) but do not contain stable periodic orbits and, moreover, such orbits  do not also appear at perturbations.\footnote{Systems with wild hyperbolic attractors belong to Newhouse regions, i.e.
open (in $C^2$-topology)  regions from the space of dynamical
systems where systems with homoclinic tangencies are dense.
However, these tangencies are such that their bifurcations do not lead to a creation of stable periodic orbits \cite{GST93c,GST96,GST08}.
The term ``wild'' goes back to the Newhouse paper \cite{N79}, where a notion of  ``wild hyperbolic set'' was introduced for a  uniformly hyperbolic basic set whose stable and unstable invariant manifolds have persistent tangencies.}
Accordingly, the
wild hyperbolic attractors must be considered as {\em genuine} strange attractors (until recently, only hyperbolic and Lorenz strange attractors were considered genuine).

The theory of wild hyperbolic attractors was laid in the paper  \cite{TS98} by Turaev and Shilnikov, in which  an example of {\em wild spiral attractor} of  four-dimensional flow was also constructed.
This attractor contains a saddle-focus equilibrium with eigenvalues $-\lambda_{ss}, -\lambda_s \pm i\omega, \lambda_u$, where $\lambda_{ss} >  \lambda_s > 0$, $\lambda_u > 0$,  $\lambda_u -\lambda_s - \lambda_{ss}<0$ and  $\lambda_u -\lambda_s >0$.  Note that if some system would have a homoclinic loop of this saddle-focus, then, in general, neither the system itself nor any close system has stable periodic orbits in a small fixed neighbourhood of the loop
\cite{OvsSh86,OvsSh91}. In the Turaev-Shilnikov example, there are no stable periodic orbits also globally,
due to the fact that the attracting set possesses a {\em pseudo-hyperbolic} structure.
The exact definition is given in \cite{TS98,TS08}.

In the case of discrete dynamical systems (maps),
the pseudohyperbolicity means that, in the absorbing domain,
the map $T$ admits
an invariant (with respect to action of the derivative $DT$) splitting
of the phase space into transversal strongly-contracting
and central-unstable subspaces $E^{ss}$ and $E^{cu}$, respectively.
The instability on $E^{cu}$ means that, first, if on $E^{cu}$ there exists a contraction, then
it is exponentially weaker
than any contraction in  $E^{ss}$, and, second, the derivative  $DT$ exponentially expands volumes in $E^{cu}$. The latter property implies that any orbit on the attractor has a positive maximal Lyapunov exponent which prevents, for example, the emergence of stable periodic orbits at small perturbations.

Then, by definition \cite{TS98,TS08}, a {\em pseudohyperbolic attractor} is an asymptotically stable chain-transitive compact invariant set which carries a uniformly
pseudo-hyperbolic structure.\footnote{Recall that the chain-transitivity means that for any $\varepsilon > 0$ and any two points of the attractor there is an $\varepsilon$-orbit which contains these points. In turn, for a map $T: x_{n+1} = \psi(x_n)$, a set $\{x_i\}_{i\in Z}$ is called an orbit if $x_{i+1} = \psi(x_i)$ and $\varepsilon$-orbit if $|x_{i+1} - \psi(x_i)|<\varepsilon$ for each $i$.}

Note that if a three-dimensional map has a pseudohyperbolic attractor with $\dim E^{ss} =1$, then the Lyapunov exponents $\Lambda_1>\Lambda_2>\Lambda_3$ of the attractor must satisfy the condition
\begin{equation}
\Lambda_1>0,\; \Lambda_1 +\Lambda_2> 0,\;  \Lambda_1 +\Lambda_2 + \Lambda_3 < 0.
\label{Lyapcr}
\end{equation}
In this paper, condition (\ref{Lyapcr}) is checked numerically for each attractor under consideration. Other necessary conditions, which we also check, are more simple and can be  checked analytically. For example, for different reasons, we exclude from the class of pseudohyperbolic attractors those ones that either contain a saddle fixed point with a two-dimensional unstable invariant manifold or if this point is a saddle-focus, see below. On the other hand, if an attractor contains only one saddle fixed point $O$ such that $\dim W^u(O)=1$ and the stable multipliers are real, then condition (\ref{Lyapcr}), where $\Lambda_1,\Lambda_2,\Lambda_3$ have been found numerically,
is quite close to be sufficient for the pseudohyperbolicity of the attractor. Additionally, we would need to further verify that the subspaces $E^{ss}$ and $E^{cu}$ depend continuously on the point of the attractor
and, moreover, that angles between the subspaces do not tend to zero.
 We plan to give the corresponding computer assisted proofs in one of the nearest papers, here
we consider condition (\ref{Lyapcr}) as sufficient.

We notice that the pseudohyperbolicity
persists at small smooth perturbations. Moreover, as it is shown in \cite{TS08}, the properties of pseudo-hyperbolic flows are inherited by the corresponding Poincar\'e maps at small periodic perturbations.  Then a periodically perturbed system with the Lorenz attractor gives one more example of wild pseudohyperbolic attractor, see also paper \cite{Sat05} where it was proved that stable periodic orbits do not appear under such perturbations.
We will call such attractor, that exists in the Poincar\'e map,
as a {\em discrete super Lorenz attractor}.

This attractor has the following characteristic properties (inherited from the flow): it has a fixed point $O$  with the
multipliers $\lambda_1,\lambda_2,\lambda_3$ such that $\lambda_1>1, 0<\lambda_3 < \lambda_2<1$ and $\lambda_1\lambda_2>1$; the absorbing domain
is a ball with two holes containing two saddle-focus fixed points (with $\dim W^u =2$); the corresponding Poincar\'e map possesses the Lorenzian symmetry (like $x\to x, y\to -y, z\to -z$).

One more new type of pseudohyperbolic Lorenz-like attractors, which we call simply as a {\em discrete Lorenz attractor}, was discovered in \cite{GOST05}, where it was shown that the three-dimensional H\'enon map
\begin{equation}
\bar x = y, \; \bar y = z, \; \bar z = M_1 + M_2 y + Bx - z^2,
\label{3Hen1}
\end{equation}
where $(x,y,z)\in \mathbb{R}^3$ and $(M_1,M_2,B)$ are parameters ($B$ is the Jacobian of the map),
can possesses strange attractors that look to be very similar to the
discrete super Lorenz attractors,  see e.g. Fig.~\ref{AL-z2}. However, the corresponding characteristic properties will be quite different: the fixed point $O$ of attractor has multipliers   $\lambda_1,\lambda_2,\lambda_3$ such that $\lambda_1<-1,\; -1<\lambda_3 <0 < \lambda_2<1$, $\;|\lambda_3| < |\lambda_2|$ and $|\lambda_1\lambda_2|>1$; the absorbing domain
is a ball with two holes containing a saddle-focus period-2 cycle; the map has no symmetries, but the Lorenzian symmetry appears locally, for invariant manifolds of $O$,
due to the point $O$ has negative multipliers.

Thus, the discrete Lorenz attractor (DLA) and discrete super Lorenz attractor (SLA) are quite different. Nevertheless,
the connection between DLA and SLA is direct and evident.
Thus, it was established in \cite{SST93,GOST05} that DLA can be born as result of local bifurcations of a fixed point with the triplet  $(-1,-1,+1)$ of multipliers. In this case the {\em second iteration} of the map can be locally embedded (up to small non-autonomous periodic terms) into a 3d flow which coincides, in some rescaled coordinates, with the Shimizu-Morioka system \cite{ShimMor}. In turn, the latter system has the Lorenz attractor for certain values of parameters \cite{ASh93}.  Thus, by \cite{TS08},
at least for values of the parameters from some open domain adjoining to the point $(M_1 = -1/4, M_2=1, B=1)$, the discrete Lorenz attractor of map (\ref{3Hen1}) is genuine pseudo-hyperbolic one.
Then  we can say,
in principle, that both the similarity and difference between the discrete Lorenz attractor and the discrete super Lorenz attractor can be expressed by a formula ``DLA is a square root from SLA''.

Note that it was shown in \cite{GGS12}, see also \cite{GGKT14}, that discrete Lorenz attractors can arise as a result of simple and universal bifurcation scenarios which naturally occur  in one-parameter families of three-dimensional maps.
A sketch of such  scenario for one-parameter families $T_\mu$ of three-dimensional dif\-feo\-mor\-phisms is shown in Fig.~\ref{scen2}. 
%
Here, the way (a)$\Rightarrow$(b)$\Rightarrow$(c) corresponds to the appearance of discrete Lorenz attractor: first, the stable fixed point undergoes
a supercritical period doubling bifurcation and becomes a saddle with negative unstable multiplier $\lambda_1$ ($\lambda_1 < -1$); next, homoclinic orbits are created such that their homoclinic points on $W^s_{loc}(O_\mu)$ are located on one side of $W^{ss}_{loc}(O_\mu)$ -- this is a general situation when the stable multipliers $\lambda_2$ and $\lambda_3$, where $-1<\lambda_3 < 0 < \lambda_2 <1$, are such that $|\lambda_3| < |\lambda_2|$, see Fig.~\ref{scen2}(c).

However, one can also imagine, see the way (a)$\Rightarrow$(b)$\Rightarrow$(d),  that the  homoclinic points on $W^{s}_{loc}(O_\mu)$ are on opposite sides from $W^{ss}_{loc}(O_\mu)$ -- this is a general situation when  $|\lambda_3| > |\lambda_2|$, see Fig.~\ref{scen2}(d). Then the corresponding attractor, called {\em discrete figure-8 attractor},
will have shape different from the discrete Lorenz attractor (see e.g. Figs.~\ref{8attrnew2} and \ref{8quas}).

Note that the condition
\begin{itemize}
\item
the saddle value $\sigma \equiv |\lambda_1|\times \max\{|\lambda_2|,|\lambda_3|\}$ is greater than 1
\end{itemize}
is necessary for the discrete Lorenz or figure-8 attractors to have a pseudo\-hy\-per\-bo\-lic structure. Indeed, in the case  $\sigma <1$ the point $O_\mu$ would be not pseudohyperbolic and,
moreover, bifurcations of homoclinic tangencies (which invetably arise here) would lead
to the emergence of periodic sinks \cite{G83,GST93c,GOT12}. Thus, if $\sigma<1$,   both the discrete Lorenz and figure-8 attractors are, in fact, {\em quasiattractors} \cite{AfrSh83a}, i.e. such
attractors  which look chaotic but stable periodic orbits can emerge near attractor
at arbitrary small perturbations.
Since these periodic sinks have, in general, very big periods and small attracting domains, the numerically or experimentally observed
global dynamics can look perfectly chaotic --
therefore, quasiattractors can be considered as ``strange attractors from physical point of view''.

For the theory of dynamical chaos, it is important that discrete Lorenz and figure-8 attractors can be genuine
strange attractors, i.e. attractors for which the property \texttt{``any orbit of the attractor has a positive maximal Lyapunov exponent''} is open (in $C^1$-to\-po\-lo\-gy).
Moreover, such attractors (Lorenz and figure-8 ones) should
be encountered
\begin{figure}[ht]
\centerline{\epsfig{file=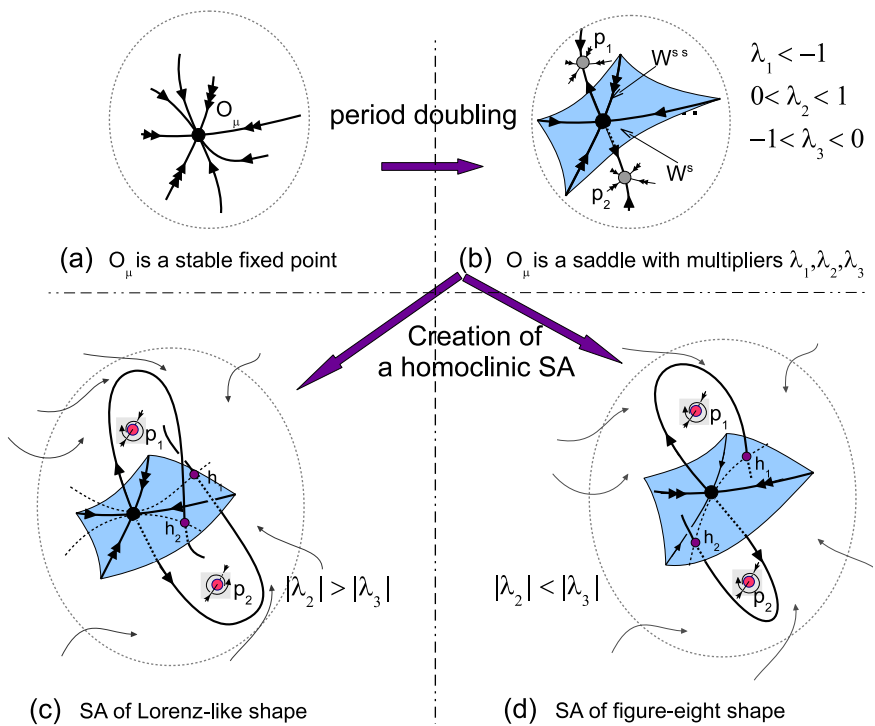, width=12cm
}}
\vspace{-1cm}
\caption{{\footnotesize A sketch of a bifurcation scenario leading to the appearance of either a discrete Lorenz attractor (the way (a)$\Rightarrow$(b)$\Rightarrow$(c)) or  a discrete figure-8 attractor (the way (a)$\Rightarrow$(b)$\Rightarrow$(d)).
The points $h_1$ and $h_2$ are two points of the same homoclinic orbit, i.e. $h_i\in W^s(O_\mu)\cap W^u(O_\mu)$ and $h_2 = T_\mu(h_1)$.  These points are located on one side of $W^{ss}_{loc}(O_\mu)$ in the Lorenz case (c) and on the opposite sides from of $W^{ss}_{loc}(O_\mu)$ in the figure-8 case (d).  }}
\label{scen2}
\end{figure}
widely in applications.
We already know several
such examples (though still not very much).
For instance,
discrete Lorenz attractors were found in three-dimensional H\'enon-like maps of various types \cite{GOST05,GGOT13,GGKT14} and in nonholonomic models of rattleback (called also a Celtic stone) \cite{GGK13,GG15}.
A discrete figure-8 attractor was first observed in a nonholonomic model of Chaplygin top~\cite{Kaz14}. These homoclinic attractors are {\em global} in the sense that there exist quite large absorbing domains in the phase space which  contain these attractors.\footnote{The birth of small discrete Lorenz attractors under global bifurcations of three-dimensional diffeomorphisms with nontransversal homoclinic and heteroclinic orbits was studied in \cite{GMO06,GST09,GO13,GOTat14}.}
Moreover, in the case of the pointed out three-dimensional quadratic H\'enon maps, the discrete Lorenz attractor seems to be only attractor of the map.

Concerning the discrete figure-eight attractors we note that they {\em have no flow analogs}, contrary to the discrete Lorenz-like attractors. Indeed, in order to have a figure-8 attractor in the Poincar\'e map of periodically forced flow, we need to take a 3d flow that has a homoclinic figure-eight of a saddle equilibrium with the eigenvalues $\nu_3 < \nu_2 <0 < \nu_1$, where
$\nu_1 + \nu_2>0$. However, in this case
the homoclinic figure-eight is not stable in general and, hence, it can not become a strange attractor of the corresponding Poincar\'e map.

The discrete Lorenz attractors as well as the discrete figure-eight attractors are novel genuine strange attractors
for diffeomorphisms of dimension $\geq 3$.
Therefore, in the present paper we focus on the study of such attractors. We consider a partial problem on the existence of discrete Lorenz and figure-eight attractors, as well as pseudohyperbolic attractors of other types, in three-dimensional generalized H\'enon maps of the form
\begin{equation}
\bar x = y, \; \bar y = z, \; \bar z = Bx + f(y,z)
\label{3HenG}
\end{equation}
with $0<B<1$.
%
By definition, the Lorenz and figure-eight attractors contain a saddle fixed point and we assume that this fixed point is in the origin, i.e. it is the point $O(0,0,0)$.
Thus, we can rewrite
map (\ref{3HenG}) in the form
\begin{equation}
\bar x = y, \; \bar y = z, \; \bar z = Bx + Az + Cy + \tilde f(y,z),
\label{GHM1-8}
\end{equation}
where $A$ and $C$ are some parameters and $\tilde f(0,0) = \tilde f^\prime(0,0) =0$.

For the numeric search of strange attractors we apply bifurcation methods
together with the construction of {\em colored Lyapunov diagrams} and {\em saddle charts}.

The method of colored Lyapunov diagrams is well-known and it consists in de\-ter\-mi\-na\-tion of parameter domains with different sets of Lyapunov exponents $\Lambda_1,\Lambda_2,\Lambda_3$ of orbits that do not leave (for forward iterations) some region of the phase space.
The cases with $\Lambda_1>0$ corresponds to strange attractors. In this paper we paid the main attention to the cases when these attractors are {\em homoclinic}, i.e. when they contain the saddle fixed point $O(0,0,0)$ of map (\ref{GHM1-8}). For this goal we improve the Lyapunov diagram adding to it a domain of parameter values (painted in {\em dark grey})
where the attractor contains the point $O$. Numerically, the corresponding domain is defined
when, first, $\Lambda_1> 0$ and, second, when the distance between some points of the attractor and the point $O$ is smaller than some $\varepsilon$ (e.g. for $\varepsilon = 10^{-4}$).

The method of saddle charts
consists in construction of a decomposition of the parameter space (e.g., we consider, for map (\ref{GHM1-8}), the $(A,C)$-parameter planes for fixed values of  $B$) into domains corresponding to different types of the point $O$. We also distinguish the cases when
the saddle value of $O$ is less and greater than one as well as the cases when positive and negative stable multipliers are leading (nonleading).\footnote{Note that analogous saddle charts with respect to equilibria of three-dimensional flow were constructed in \cite{book2}, see Part II, Appendix C there.}
In Section~\ref{HomAttHen} we construct the saddle charts for the fixed point $O(0,0,0)$ of map (\ref{GHM1-8});
an example of such saddle chart on the $(A,C)$-plane is shown in Fig.~\ref{fig:8cond1} for $B=0.5$.
Evidently, the saddle chart of point $O$
is completely independent of the nonlinear terms $\tilde f(y,z)$ of map (\ref{GHM1-8}). It is not the case for the Lyapunov diagram (constructed on the same $(A,B)$-coordinate plane) which depends essentially on these terms. Thus, having together the saddle chart and the Lyapunov diagram (on the same $(A,C)$-parameter plane) we can visually define which strange attractors can be homoclinic and, moreover, we can predict their type.

One of goals
of this paper is to provide new examples of pseudohyperbolic attractors of three-dimensional maps. We restrict this problem to such  
attractors of map (\ref{GHM1-8}) which contain the fixed point $O(0,0,0)$. In this case we can also essentially narrow the field of study using the following considerations. First, if the attractor is pseudohyperbolic, then the fixed point is also pseudohyperbolic. In the case $\dim W^u(O) =1$, this implies that $\sigma >1$. Second, since the attractor is wild hyperbolic,  inevitably arising homoclinic tangencies must be such that their bifurcations do not lead to creation of periodic sinks. Then we automatically exclude cases where the fixed point is a saddle-focus \cite{GST93c,GST08}. Third, in the cases where the point $O$ is a saddle with $\dim W^u(O)=2$, the corresponding homoclinic attractor should be also quasiattractor. However, this fact is more heuristic than absolutely true. The point is that the two-dimensional unstable manifold forming an attractor must be folded in several directions. Then these ``multi-folds'' must produce, in the three-dimensional case, the so-called non-simple homoclinic tangencies \cite{Tat01,GGT07} which, in turn, lead to the appearance of stable periodic orbits.

Thus, we must consider only cases where $\dim W^u(O) =1$ and  $\sigma >1$. Such cases are only four depending on a set of multipliers $\lambda_1,\lambda_2,\lambda_3$ of $O$:
\begin{enumerate}
\item[P1.] $\lambda_1 <-1, 0<\lambda_2< 1, -1<\lambda_3<0$,$ \lambda_2> |\lambda_3|$, and $\sigma>1$;
\item[P2.] $\lambda_1 <-1, 0<\lambda_2< 1, -1<\lambda_3<0$, $|\lambda_3|> \lambda_2$, and $\sigma>1$;
\item[P3.] $\lambda_1 > 1, -1<\lambda_2 <\lambda_3<0$ and $\sigma>1$;
\item[P4.] $\lambda_1 >1, 0<\lambda_3 <  \lambda_2<1$ and $\sigma>1$;
\end{enumerate}

In Section~\ref{Ex8Lor} we give some examples of the corresponding discrete pseudohyperbolic attractors.
%
For example, for map (\ref{GHM1-8}) with $\tilde f = -z^2$ and $B=0.7$ (in fact, for map (\ref{3Hen1}))   the picture of
the Lyapunov diagram super\-impo\-sed on the saddle chart of the point $O(0,0,0)$ is shown in Fig.~\ref{AL-z2}(a).
We see here that the dark grey part of the diagram intersects with the domain \textsf{LA} of the chart corresponding to the case P1 of multipliers of point $O$.
Thus, we have situation when the corresponding homoclinic attractors should be the discrete Lorenz ones.
And indeed this is the case: the projections (into the $(x,y)$-plane) of phase portraits of two such attractors
are shown in Figs.~\ref{AL-z2}(b) and~\ref{AL-z2}(d); condition (\ref{Lyapcr}) holds for numerically obtained Lyapunov exponents (in both cases  $\Lambda_1 + \Lambda_2 \approx 0.02$); and
the stable and unstable invariant manifolds of $O$ have homoclinic intersections, see Figs.~\ref{AL-z2}(c),~\ref{AL-z2}(e) and~\ref{AL-z2}(f). Thus, both attractors from Fig.~\ref{AL-z2} are, in fact, wild pseudohyperbolic attractors. Note that the second attractor, from Fig.~\ref{AL-z2}(d), has a lacuna, i.e. a specific hole inside the attractor containing saddle invariant curves without homoclinics. Note that the flow Lorenz attractors with lacunas (containing saddle limit cycles) exist in the Lorenz model \cite{ABS82}.

We give also three more examples of discrete pseudohyperbolic attractors of new types: (i) a discrete figure-8 attractor, see Fig.~\ref{8attrnew2},  for map (\ref{GHM1-8}) with $\tilde f = -y^2 + 0.515 yz -1.45 z^2$ and $B=0.72$ -- case P2 of multipliers of $O$; (ii) a discrete double figure-8 attractor, see Fig.~\ref{d8attr}, for map (\ref{GHM1-8}) with cubic nonlinearity $\tilde f = -2y^3 - 2.25 z^3$ and $B=0.5$ -- case P3 of multipliers of $O$; (iii) a discrete super figure-8 attractor, see  Fig.~\ref{sup8b}, for map (\ref{GHM1-8}) with cubic nonlinearity $\tilde f = y^3 - z^3$ and $B=0.05$ -- case P4 of multipliers of $O$.
In all these cases, condition (\ref{Lyapcr}) holds.
We include ``figure-8'' in the name of these attractors because of the configuration of their unstable manifold $W^u(O)$, compare Figs.~\ref{scen2}(d) and  \ref{spirat}.

In Section~\ref{Exstrattr} we consider several interesting examples of homoclinic quasiattractors in maps of form (\ref{GHM1-8}).
The first example is given by a ``discrete figure-8-like'' attractor of map (\ref{GHM1-8}) with $\tilde f = -y^2 - 1.45 z^2$ and $B=0.7$, see Fig.~\ref{8quas}. The attractor contains the saddle $O$ with multipliers corresponding to the case P2. Despite this fact, the necessary condition (\ref{Lyapcr}) is not fulfilled:  we calculate that $\Lambda_1 + \Lambda_2 \approx -0.03 <0$ in this case. Therefore, we conclude that this attractor is a quasiattractor, since the inequality $\Lambda_1 + \Lambda_2 <0$ implies that the attractor contains saddle periodic orbits with saddle values less than 1 and, thus, bifurcations of the corresponding homoclinic tangencies have to lead to the birth of periodic sinks. This example shows that the property of pseudo-hyperbolicity of homoclinic attractor is more delicate than simply property of pseudo-hyperbolicity of its fixed points. It is also true in the case of homoclinic attractors with $\dim W^u(O)=2$. Two examples of such attractors of map (\ref{GHM1-8}) with $\tilde f = -y^3$  and $B=0.5$ are shown in Fig.~\ref{book-attr}.

In Section~\ref{spirtype}, we give two examples of spiral quasiattractors, i.e. attractors containing point $O$ that is a saddle-focus. In Fig.~\ref{spir1}, a discrete spiral attractor is shown for map (\ref{GHM1-8}) with $\tilde f = 0.5y^3 - 6 zy^2 + 0.5 z^3$ at $B=0.5$. The attractor contains the saddle-focus $O$ of type (2,1), i.e. with multipliers such that $\lambda_1>1,\lambda_{2,3}=\rho e^{\pm i\varphi}$ and $0<\rho<1$. In Fig.~\ref{shilattr}, a discrete Shilnikov attractor is shown for map (\ref{GHM1-8}) with $\tilde f = -y^2$ and $B=0.5$. The attractor contains the saddle-focus $O$ of type (1,2), i.e. with multipliers such that $\lambda_{1,2} =\gamma e^{\pm i\psi}$, where $\gamma>1$, and $0<\lambda_3 <1$. We give also in Section~\ref{spirtype} some small background related to the Shilnikov spiral attractors. In particular, we briefly discuss a scenario of the emergence of a discrete Shilnikov attractor (this scenario is illustrated in Fig.~\ref{shildiscr}).

In Section~\ref{HenType} we consider the limit case $B=0$ of map (\ref{GHM1-8}). In this case, the saddle chart of $O$ takes a specific form, see Fig.~\ref{fig:8cond1(0p0)}. We examine here the partial case when map (\ref{GHM1-8}) becomes the H\'enon map at $B=0$ (i.e., in fact, the case of map (\ref{3Hen1}) with $B=0$). Then we find that the H\'enon map can have strange attractors
(with a fixed point)
only of two types -- by our terminology, they are either a ``discrete Lorenz quasiattractor'' (when the Jacobian of map is negative) or a ``discrete figure-8 quasiattractor'' (when the Jacobian is positive). Some examples of the corresponding attractors are shown in Fig.~\ref{fig:HAandLQ}. In particular, the discrete Lorenz quasiattractor from Fig.~\ref{fig:HAandLQ}(a) is the famous H\'enon attractor \cite{H76}.

\section{On the saddle charts for
three-dimensional generalized H\'enon maps.} \label{HomAttHen}

We consider map (\ref{GHM1-8}).
The characteristic polynomial at the point $O(0,0,0)$ has the form
\begin{equation}
\chi(\lambda)\equiv\lambda^3 - A \lambda^2 - C \lambda - B = 0.
\label{GHM3-8}
\end{equation}
Then we can consider coefficients $A,B,C$ as parameters that control bifurcations of point $O$.
Indeed, in the $(A,B,C)$-parameter space, there are the following
codimension one bifurcation surfaces:
\begin{equation}
L^+ :\; A+B+C = 1
\label{eq:lm1-1(8)}
\end{equation}
when point $O$ has the multiplier $+1$,
\begin{equation}
L^- : \; A+B-C = - 1
\label{eq:lm1-2(8)}
\end{equation}
when point $O$ has the multiplier $-1$, and
\begin{equation}
L_\varphi :\; + B(A-B) +C = 0, \;\; -2 < A-B <2
\label{eq:lm1-3(8)}
\end{equation}
when point $O$ has the  multipliers $\lambda_{1,2} = e^{\pm i\varphi}$, where $0<\varphi<\pi$.

\subsection{Conditions for existence of discrete Lorenz attractors.}

As was said in the Introduction, if map (\ref{GHM1-8}) possesses a discrete Lorenz attractor containing the fixed point $O(0,0,0)$, then
the multipliers $\lambda_{1},\lambda_{2},\lambda_{3}$ of this point satisfy the following inequalities
\begin{equation}
\begin{array}{l}
a) \qquad \lambda_{1} < -1,\; -1 <\lambda_{3}< 0,\; \lambda_2 > 0, \\
b)\qquad 0 < \lambda_2 < 1,  \\
c) \qquad \lambda_{2} > |\lambda_{3}|, \\
d) \qquad \lambda_{1}\lambda_{2} < -1 .
\end{array}
\label{GHM4-8}
\end{equation}

\begin{lm}
Let fixed point $O(0,0,0)$ of map {\rm (\ref{GHM1-8})} have the multipliers satisfying
{\rm (\ref{GHM4-8})} for $B>0$. Then one has
\begin{equation}
\begin{array}{l}
(a) \qquad C > A + B + 1, \\
(b)\qquad C < 1-B - A, \\
\displaystyle (c) \qquad C  > -\frac{B}{A} ,\; A<0,  \\ 
%
(d) \qquad C > 1 + AB + B^2 .
\end{array}
\label{lm2-8(1)}
\end{equation}
\label{lm2(8)}
\end{lm}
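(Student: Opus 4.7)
My plan is to convert the conditions (\ref{GHM4-8}) on the multipliers into the four inequalities (\ref{lm2-8(1)}) by repeatedly applying the Vieta relations implicit in (\ref{GHM3-8}), namely
\[
\lambda_1+\lambda_2+\lambda_3=A,\qquad \lambda_1\lambda_2+\lambda_1\lambda_3+\lambda_2\lambda_3=-C,\qquad \lambda_1\lambda_2\lambda_3=B.
\]
Each of the four parameter inequalities will drop out as the sign of a suitably chosen cubic polynomial at a suitably chosen point, so the whole argument is elementary once the right auxiliary polynomials are identified.

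Parts (a) and (b) come from evaluating $\chi$ at $\lambda=\pm 1$. Under (\ref{GHM4-8}) all three factors of $\chi(1)=(1-\lambda_1)(1-\lambda_2)(1-\lambda_3)$ are positive, while in $\chi(-1)=-(1+\lambda_1)(1+\lambda_2)(1+\lambda_3)$ only $1+\lambda_1<0$ and $1+\lambda_2,1+\lambda_3>0$; hence $\chi(1)>0$ and $\chi(-1)>0$, which written out as $1-A-B-C>0$ and $-1-A-B+C>0$ give exactly (b) and (a).

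For part (d), I form the cubic whose roots are the pairwise products $\lambda_i\lambda_j$:
\[
P(\mu):=(\mu-\lambda_1\lambda_2)(\mu-\lambda_1\lambda_3)(\mu-\lambda_2\lambda_3)=\mu^3+C\mu^2+AB\mu-B^2,
\]
where the middle coefficient is identified via the Vieta relations as $\lambda_1\lambda_2\lambda_3(\lambda_1+\lambda_2+\lambda_3)=AB$ and the constant term as $(\lambda_1\lambda_2\lambda_3)^2=B^2$. Under (\ref{GHM4-8}) the three roots are ordered
\[
\lambda_1\lambda_2<-1<\lambda_2\lambda_3<0<\lambda_1\lambda_3,
\]
the first by hypothesis (d), the last because $\lambda_1,\lambda_3<0$, and the middle because $|\lambda_2|<1$ and $|\lambda_3|<1$ force $|\lambda_2\lambda_3|<1$. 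Since $P$ is monic and $-1$ lies strictly between the smallest and middle roots, $P(-1)=-1+C-AB-B^2>0$, which is inequality (d).

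For part (c) I use the trick of evaluating $\chi$ at $\lambda=A$. This gives $\chi(A)=A^3-A\cdot A^2-CA-B=-CA-B$ from the explicit formula, and $\chi(A)=(A-\lambda_1)(A-\lambda_2)(A-\lambda_3)=(\lambda_2+\lambda_3)(\lambda_1+\lambda_3)(\lambda_1+\lambda_2)$ from the factored form, so
\[
CA+B=-(\lambda_1+\lambda_2)(\lambda_1+\lambda_3)(\lambda_2+\lambda_3).
\]
Under (\ref{GHM4-8}) one has $\lambda_1+\lambda_2<0$ (from $\lambda_1<-1,\lambda_2<1$), $\lambda_1+\lambda_3<-1$ (from $\lambda_1<-1,\lambda_3<0$), and $\lambda_2+\lambda_3>0$ precisely by hypothesis (c); hence the product of the three sums has sign $(-)(-)(+)>0$ and $CA+B<0$. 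The same sign inequalities give $A=\lambda_1+\lambda_2+\lambda_3<0$, so dividing yields $C+B/A=(CA+B)/A>0$, i.e.\ $C>-B/A$ with $A<0$. The main conceptual move in the entire proof is spotting that $\lambda=A$ is the right evaluation point in this last step; the rest of the lemma is sign-tracking.
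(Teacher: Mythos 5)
Your proof is correct, and parts (a), (b) coincide with the paper's argument (sign of $\chi(\pm 1)$ read off from the factorization over the real roots). The interesting divergence is in parts (c) and (d). For (d) your resolvent cubic $P(\mu)=\mu^3+C\mu^2+AB\mu-B^2$ with roots $\lambda_i\lambda_j$ is an exact repackaging of what the paper does: since $\lambda_i\lambda_j=B/\lambda_k$, one has $P(-1)=\chi(-B)/B$, and the paper obtains the same inequality by noting that (\ref{GHM4-8}d) forces $\lambda_3>-B$ and hence $\chi(-B)>0$; the two computations are literally identical, so this is a difference of presentation only. For (c) your route is genuinely different and, in fact, tighter than the paper's. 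The paper identifies the boundary $AC+B=0$ with the resonance $\lambda_2=-\lambda_3$ and then runs a first-order perturbation expansion in $\varepsilon=-(AC+B)$ to show that for small $\varepsilon>0$ the root near $\sqrt{C}$ dominates the one near $-\sqrt{C}$ in absolute value; strictly speaking this establishes the \emph{converse} implication ($AC+B<0$ small implies $\lambda_2>|\lambda_3|$) and only locally near the resonance surface, relying implicitly on a continuity/connectedness argument to get the stated direction globally. Your identity
\begin{equation*}
AC+B=-\chi(A)=-(\lambda_1+\lambda_2)(\lambda_1+\lambda_3)(\lambda_2+\lambda_3),
\end{equation*}
obtained by evaluating $\chi$ at $\lambda=A=\lambda_1+\lambda_2+\lambda_3$, turns (\ref{GHM4-8}c) into the single sign condition $\lambda_2+\lambda_3>0$ and yields $AC+B<0$ (together with $A<0$) exactly and in the direction the lemma actually asserts, with no smallness assumption. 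This is the cleaner argument; what the paper's perturbative version buys instead is an explicit picture of how the splitting $\lambda_2>|\lambda_3|$ versus $\lambda_2<|\lambda_3|$ is organized around the resonance curve $AC+B=0$, which it reuses when building the saddle chart.
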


\begin{proof}
Noth that since
$B>0$, the fixed point $O$ has always a positive multiplier $\lambda_2$ and also $\chi(0)=-B<0$. Then, if inequalities (\ref{GHM4-8}a) hold, we have that $\chi(-1) >0$ and, thus,
there exists only by one root (no more than one, since the polynomial is cubic)
in each interval $(0;+\infty)$, $(-1;0)$ and $(-\infty;-1)$. Since $\chi(-1) = -1 - A + C - B$, the condition $\chi(-1) >0$ implies (\ref{lm2-8(1)}a).

\begin{figure}[ht]
\centerline{\epsfig{file=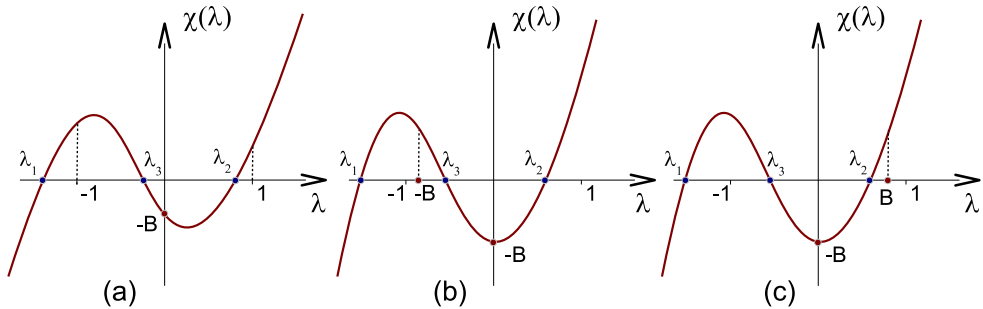, width=14cm
}} \caption{
{\footnotesize Geometric interpretation of the fact that for $B>0$,
the characteristic polynomial (\ref{GHM3-8}) has the roots  $\lambda_{1},\lambda_{2},\lambda_{3}$ such that
(a) $\lambda_{1}<-1, -1<\lambda_{3}<0, 0<\lambda_{2}<1$ ;  (b) $ -B<\lambda_{3}<0$, i.e. $\lambda_1\lambda_2 < -1$ и
(c) $ 0<\lambda_{2}<B$, i.e. $\lambda_1\lambda_3 > 1$. }}
\label{fig:harur1}
\end{figure}

It follows also from (\ref{GHM4-8}a) and (\ref{GHM4-8}b) for $B>0$ that   $\chi(+1) >0$, see Figure~\ref{fig:harur1}(a), i.e.  inequality
(\ref{lm2-8(1)}b) holds.

Let inequality (\ref{GHM4-8}c) hold. The limiting case $\lambda_2 = - \lambda_3 = \lambda_0$ means that $\lambda_1 = A$ and, hence,
 $\chi(\lambda)=(\lambda - A)(\lambda^2 - \lambda_0^2)$. By virtue of (\ref{GHM3-8}), we get that $\lambda_0^2 = C$ and, thus, $AC + B = 0$.
We show that if
\begin{equation}
AC + B < 0 ,
\label{GHM4-8***}
\end{equation}
then $\lambda_2 > |\lambda_3|$. It is obvious that the inequalities  (\ref{GHM4-8***}) and (\ref{lm2-8(1)}c) are equivalent for $A<0$.

Let $AC + B = -\varepsilon$, i.e. $B = -AC - \varepsilon$, where $\varepsilon>0$ is sufficiently small. Then the characteristic polynomial
(\ref{GHM3-8}) can be written in the form
\begin{equation}
\chi(\lambda) = (\lambda-A)(\lambda^2 -C)  + \varepsilon = 0.
\label{GHM5-8}
\end{equation}
Note that we have $A<-1$ and $0<C<1$ for $\varepsilon=0$, since we assume that the conditions (\ref{GHM4-8}a) and (\ref{GHM4-8}b) for
the multipliers of the point  $M$ are satisfied. Then equation (\ref{GHM5-8}) has three roots close to
$\lambda = \left(A, \sqrt{C}, - \sqrt{C}\right)$.

Let $\lambda_2 = \sqrt{C} + \delta_2$ be a solution of equation (\ref{GHM5-8}). Then for $\varepsilon$ small enough, we get from (\ref{GHM5-8})
that
$$
\delta_2 = \frac{- \varepsilon}{2\sqrt{C}(\sqrt{C} + |A|)} + O(\varepsilon^2).
$$
Similarly, if $\lambda_3 = - \sqrt{C} + \delta_3$ is a solution of  (\ref{GHM5-8}), then
$$
\delta_3 =  \frac{\varepsilon}{2\sqrt{C}(|A| -\sqrt{C})} + O(\varepsilon^2).
$$
Since $A<-1, 0<C<1$, we obtain that $|\delta_2| < |\delta_3|$ and, hence, $\lambda_2 > |\lambda_3|$.

Finally, since $\lambda_1\lambda_2\lambda_3 = B$, it implies from (\ref{GHM4-8}d) that $\lambda_3>-B$. The last inequality implies that $\chi(-B)>0$,
see Figure~\ref{fig:harur1}(b), i.e. $-B^3 - AB^2 + BC - B>0$.  For $B>0$, this inequality is equivalent to (\ref{lm2-8(1)}d). 
\end{proof}

\subsection{Conditions for existence of discrete figure-8 attractors.}

As mentioned in the Introduction, if  a figure-8 attractor exists in map (\ref{GHM1-8}) for $0<B<1$, then the fixed point of attractor should have
multipliers satisfying
\begin{equation}
\begin{array}{l}
a) \qquad \lambda_{1} < -1,\; -1 <\lambda_{3}< 0,\; \lambda_2 > 0, \\
b)\qquad 0 < \lambda_2 < 1,  \\
c) \qquad \lambda_{2} < |\lambda_{3}|, \\
d) \qquad \lambda_{1}\lambda_{3} >1 .
\end{array}
\label{GHM6-8}
\end{equation}

\begin{lm}
Let fixed point $O(0,0,0)$ of map {\rm (\ref{GHM1-8})} have the multipliers satisfying
{\rm (\ref{GHM6-8})} for $B>0$. Then one has
\begin{equation}
\begin{array}{l}
(a) \qquad C > A + B + 1, \\
(b)\qquad C < 1-B - A, \\
\displaystyle (c) \qquad C  < -\frac{B}{A} ,\; A<0,  \\ 
%
(d) \qquad C <  -1 - B A + B^2 .
\end{array}
\label{lm3-8(1)}
\end{equation}
\label{lm3(8)}
\end{lm}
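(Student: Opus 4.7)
My plan is to mirror the proof of Lemma \ref{lm2(8)}, adapting only those steps that depend on conditions (c)--(d), since (\ref{GHM6-8}a)--(b) coincide with (\ref{GHM4-8}a)--(b). In particular, inequalities (\ref{lm3-8(1)}a) and (\ref{lm3-8(1)}b) would follow verbatim from the first part of the earlier proof: since $B>0$ implies $\chi(0)=-B<0$, the requirements $\chi(-1)>0$ and $\chi(+1)>0$ distribute the three real roots into the intervals $(-\infty,-1)$, $(-1,0)$, $(0,1)$, and these two sign conditions translate directly into $C>A+B+1$ and $C<1-A-B$.

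For (\ref{lm3-8(1)}c), I would reuse the perturbation argument of Lemma \ref{lm2(8)} unchanged. The borderline case $\lambda_{2}=-\lambda_{3}=\sqrt{C}$ again forces $\lambda_{1}=A$ and $AC+B=0$; writing $B=-AC-\varepsilon$ and expanding the roots of (\ref{GHM5-8}) produces exactly the same expressions for $\delta_2$ and $\delta_3$, from which $|\delta_2|<|\delta_3|$ when $\varepsilon>0$. The only change is that the figure-8 inequality $\lambda_2<|\lambda_3|$ now corresponds to $\varepsilon<0$, i.e.\ $AC+B>0$. Since $A<0$ in the regime under consideration, dividing by $A$ reverses the inequality and yields $C<-B/A$.

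For (\ref{lm3-8(1)}d), I would exploit the identity $\lambda_{1}\lambda_{2}\lambda_{3}=B$ together with the assumed signs. From $\lambda_{1}\lambda_{3}>1$ one gets $\lambda_{2}=B/(\lambda_{1}\lambda_{3})<B$, while $\lambda_{1}<-1<B$ and $\lambda_{3}<0<B$ as well. The factorization $\chi(B)=(B-\lambda_{1})(B-\lambda_{2})(B-\lambda_{3})$ then gives $\chi(B)>0$. Since a direct evaluation yields
\[
\chi(B)=B^{3}-AB^{2}-CB-B=B\bigl(B^{2}-AB-C-1\bigr),
\]
and $B>0$, this is equivalent to $C<-1-AB+B^{2}$, as required.

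No step presents a real obstacle: (a) and (b) are mechanical sign checks on $\chi(\pm 1)$, (c) is the perturbation computation already carried out in Lemma \ref{lm2(8)} with the sense of one inequality flipped, and (d) is a one-line evaluation of $\chi$ at $\lambda=B$ followed by algebraic rearrangement. The only subtlety worth emphasizing is that the inequality reversal in (c) is driven by the sign of $A$, not by $\varepsilon$ alone, so the hypothesis $A<0$ must be retained in the statement.
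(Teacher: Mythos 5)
Your proposal is correct and follows essentially the same route as the paper: the paper likewise reduces (a)--(c) to Lemma~\ref{lm2(8)} (noting that conditions (a),(b) coincide and (c) is reversed), and derives (d) by observing that $\lambda_1\lambda_2\lambda_3=B$ and $\lambda_1\lambda_3>1$ force $\lambda_2<B$, hence $\chi(B)>0$, which is exactly the inequality $C<-1-AB+B^2$ after dividing by $B>0$. Your explicit factorization $\chi(B)=(B-\lambda_1)(B-\lambda_2)(B-\lambda_3)$ is just a cleaner justification of the sign of $\chi(B)$ than the paper's appeal to Figure~\ref{fig:harur1}(c).
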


\textit{Proof}.
Due to Lemma~\ref{lm2(8)}, it suffices to derive only condition (\ref{lm3-8(1)}d). Indeed, the inequalities (a) and (b) of (\ref{GHM6-8}) and (\ref{GHM4-8}) are
the same,
whereas the inequalities (c)
are opposite.
Since $\lambda_{1}\lambda_2\lambda_{3} = B$, it follows from condition $\lambda_{1}\lambda_{3} >1$ that $\lambda_2 <B$. It is evident
that the last inequality implies $\chi(B)>0$, see Figure~\ref{fig:harur1}(c), i.e. $B^3 - AB^2 - BC - B>0$, which gives inequality
(\ref{lm3-8(1)}d) for $B>0$. $\;\;\Box$

\section{Construction of the complete saddle chart of $O(0,0,0)$.
}  \label{TypeAttr}

We notice that, for any fixed $B$,
inequalities (\ref{lm2-8(1)}) and (\ref{lm3-8(1)})
define, in the corresponding $(A,B)$-parameter plane, the domains  \textsf{LA} and \textsf{8A},
where the saddle fixed point $O(0,0,0)$
is of the suitable type in order to map (\ref{GHM1-8}) could have
either a discrete Lorenz attractor or a discrete figure-8 attractor,
respectively. Besides, knowing the geometric sense of the inequalities (\ref{lm2-8(1)}) and (\ref{lm3-8(1)}) and of their opposites, we can
construct (for a given fixed $B$) a partition of the $(A,C)$-plane into the domains corresponding to different types of the fixed point
$O(0,0,0)$. We call such a partition as {\em a saddle chart}.

\begin{figure}[ht]
\centerline{\epsfig{file=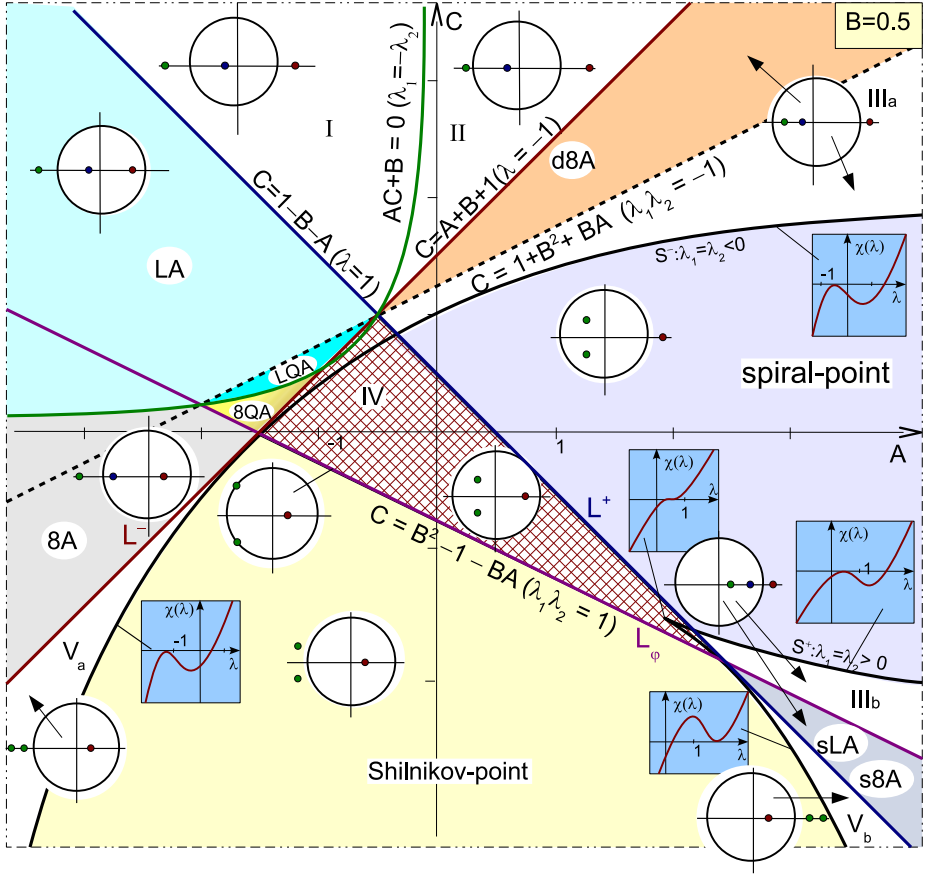, width=14cm
}}
\vspace{-1cm}
\caption{
{\footnotesize The ``saddle chart'' for the map (\ref{GHM1-8}) at $B=0.5$.
}
}
\label{fig:8cond1}
\end{figure}

In Figure~\ref{fig:8cond1}, such a saddle chart is shown for the case $B=0.5$.
In this chart we select the domain \textsf{IV}, the so-called ``stability triangle''
(the domain $\{C> B^2 -1 -BA\}\cap\{C< A+B+1\}\cap\{C < 1-B-A\}$), related to the case where the fixed point  $O(0,0,0)$ is asymptotically stable.
For all other values of parameters $A$ and $C$ (except for those on the bifurcation curves $L^+,L^-$ and $L_\varphi$) the point $O(0,0,0)$ is saddle and, thus, it has
multipliers inside and outside the unit circle (the location of multipliers is also shown in Figure~\ref{fig:8cond1}).
The boundaries of the domains in the saddle chart are seven main curves. They contain, first, the  bifurcation curves $L^+$, $L^-$ and $L_\varphi$ given by equations (\ref{eq:lm1-1(8)})--(\ref{eq:lm1-3(8)}).
Notice that the curve $L_\varphi$ is only a part of the boundary curve $C = B^2 - 1 - BA$ which
is not a bifurcation curve for
$|A-B| \geq 2$, and, moreover,
the multipliers of $O$ are $(B, -|\lambda|, -|\lambda|^{-1})$) for $A-B \leq -2$ and $(B, |\lambda|, |\lambda|^{-1})$) for $A-B \geq 2$.

In the saddle chart there are also the following four boundary curves:
\begin{itemize}
\item
the curve $C = 1 + B^2 + B + A$, when $\lambda_1\lambda_2 = -1$,
\item
the curve $AC + B = 0, \;\; A <0\;$, when the resonance $\lambda_1  = -\lambda_2$ takes place,
\end{itemize}
and
two ``multiple roots'' curves
\begin{itemize}
\item
$S^-$, when $\lambda_1  = \lambda_2 <0$,
\item
$S^+$, when $\lambda_1  = \lambda_2 >0$,
\end{itemize}
which separate the domains with ``node'' and ``focus'' points as well as ``saddle'' and ``saddle-focus'' points.
The curves $S^-$ and $S^+$ are given by the formulas
$$
S^\pm: \;\;\left(\lambda_\pm\right)^3 - A \left(\lambda_\pm\right)^2 -C \lambda_\pm -B =0,
$$
where
$\lambda_\pm$ are the real roots of the equation $3\lambda^2 - 2A\lambda - C=0$.

We note that the domains \textsf{LA} and \textsf{8A} adjoin the domains
\textsf{LQA}:
$\{AC +  B < 0\}\cap\{C< 1 + B^2 +BA\}\cap\{C > A+B+1\}$ and \textsf{8QA}: $\{AC +  B > 0\}\cap\{C > B^2 -1 - BA\}\cap\{C > A+B+1\}$, respectively.
These domains correspond to such  values of $A$ and $C$ that point $O$ has multipliers as follows
\begin{itemize}
\item
\textsf{LQA}: $\lambda_1 <-1, \lambda_2 > |\lambda_3|$ and $\sigma = |\lambda_1\lambda_2| <1$;

\item
\textsf{8QA}: $\lambda_1 <-1, \lambda_2 < |\lambda_3|$ and $\sigma = \lambda_1\lambda_3 <1$.
\end{itemize}
Thus, if map
(\ref{GHM1-8}) would have a homoclinic attractor with point $O$, then this attractor must be a quasiattractor (since $\sigma<1$). Accordingly, we will call these attractors as a {\em discrete Lorenz quasiattractor} if $(A,B)\in\; \mbox{\textsf{LQA}} $ and a {\em discrete
figure-8 quasiattractor} if $(A,B)\in\; \mbox{\textsf{8QA}}$. Note that when $B$ is small these quasiattractors can be labeled also as {\em Henon-like attractors}, see Section~\ref{HenType}.

In the ``spiral-point'' domain, point $O$ has multipliers
$
\lambda_1>1, \lambda_{2,3} = \rho e^{\pm i\varphi}, 0<\rho<1, 0<\varphi<\pi.
$
Therefore, for the corresponding parameters $A$ and $C$, the map
(\ref{GHM1-8}) can have a {\em spiral quasiattractor}, see Figure~\ref{spir1}.
The boundaries of the ``spiral-point'' domain, belonging to the domain $\{C>1-B-A, C<A+B+1\}$, are given by the ``multiple roots'' curves $S^+$ и $S^-$.

In Figure~\ref{fig:8cond1}, the ``Shilnikov-point'' domain is also pointed out. For the corresponding values of $A$ and
$C$, point $O$ has multipliers
$0< \lambda_1<1, \lambda_{2,3} = \rho e^{\pm i\varphi}, \rho>1, 0<\varphi<\pi$.
Thus, for these parameter values,
map (\ref{GHM1-8}) can have the so-called
{\em discrete Shilnikov attractor} \cite{GGS12,GGKT14}, see Fig~\ref{shilattr}.
Also in the domains
$VI$ and $VI^\prime$ this attractor becomes with a saddle fixed point (all multipliers are real).
The topological dimension of such attractors, as well as the dimension of the homoclinic attractors which can exist in the domains I and II, is equal to 2. In our opinion, such attractors are, in fact, quasiattractors, see Introduction.

\begin{figure}[ht]
\centerline{\epsfig{file=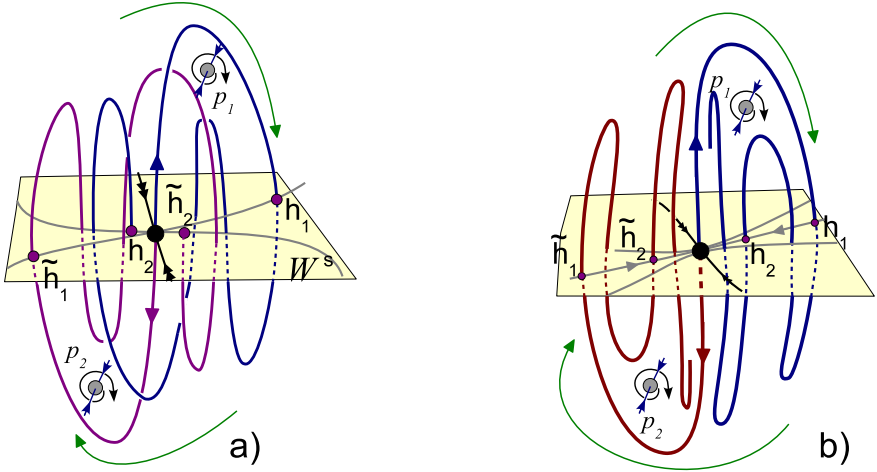, width=14cm
}}
\vspace{-1cm}
\caption{
{\footnotesize Homoclinic configurations in the cases of (а) a spiral attractor; (b) a double figure-8 attractor from the domain \textsf{dLA};
(c) a super-figure-8 attractor from the domain. \textsf{sLA$\&$s8A}.  }
}
\label{spirat}
\end{figure}

Finally, we mention two more interesting domains shown in Figure~\ref{fig:8cond1}, the domains \textsf{D8A} and \textsf{S8A}
corresponding to the values of parameters when map~(\ref{GHM1-8}) can have a pseudohyperbolic homoclinic attractor.

In the domain \textsf{D8A}, the point $O(0,0,0)$ has multipliers $\lambda_1>1, -1 <\lambda_2 <\lambda_3 <0$
and the saddle value $\sigma = |\lambda_1\lambda_2| > 1$ -- the case P3 of multipliers of $O$. Then each of two one-dimensional unstable invariant manifolds (separatrices) of point
$O$ can have a configuration of a figure-8 attractor, see Figure~\ref{spirat}(a).
Therefore, we call this attractor as a {\em discrete double figure-8 attractor}.
The condition $\sigma > 1$ is necessary for this attractor to be pseudohyperbolic and, therefore,
in the domain \textsf{IIIa} such attractor becomes quasiattractor.

In the domain \textsf{S8A}, all three multipliers of $O(0,0,0)$  are positive,
 $\lambda_1>1, 0 <\lambda_3 <\lambda_2 < 1$, and $\sigma = \lambda_1\lambda_2$ is
greater than 1 -- the case P4 of multipliers of $O$.
Then the strong stable  manifold $W^{ss}(O)$ divides the two-dimensional stable
manifold $W^s(O)$ into two components.
If both unstable separatrices would intersect only one of these components of $W^s(O)$, the attractor looks to be similar to the {\em discrete super-Lorenz attractor}, see Introduction. In the case when the unstable separatrices intersect both components of $W^s(O)\backslash W^{ss}(O)$
the corresponding attractor will have a shape of a figure-8 attractor. We will call it a \emph{discrete super-figure-8 attractor}.
With the prefix ``super'' we stress that unlike the discrete Lorenz and figure-8 attractors shown in Figure~\ref{scen2},
the points $p_1$ are $p_2$ are fixed here.
Besides, as always, the condition $\sigma > 1$
is necessary for the pseudohyperbolicity (hence, in the domain \textsf{IIIb}, such attractors are quasiattractors).\footnote{In our numerics,
we were still able to find a super-figure-8 attractor in map (\ref{GHM1-8}) with a cubic nonlinearity (then the map is central-symmetric), see Fig.~\ref{sup8b}. However, despite all our efforts, super-Lorentz attractors
were not found -- probably because of the structure Henon-like maps, since they do not allow  Lorenz-like symmetries.}

\subsection{On a possibility of using the saddle charts for 3D maps. }

We see that the saddle charts can be easily constructed for three-dimensional H\'enon-like maps. However, this method can be adapted for more wide class of three-dimensional maps of form
\begin{equation}
(\bar x, \bar y, \bar z) = D \left(\begin{array}{l} x \\ y \\z \end{array} \right) + F(x,y,z),
\label{M1}
\end{equation}
where $(x,y,z)\in \mathbb{R}^3$, $D$ is a $(3\times 3)$-matrix with $\det D \neq 0$,
$\displaystyle F(0)=0, \frac{\partial D\; F(0)}{\partial (x,y,z)}=0$. Then in general case, by some affine transformation of coordinates, map {\rm (\ref{M1})} can be brought to such a form where the new matrix $D_0$ has a H\'enon form,
\begin{equation}
D_{0} = \left(\begin{array}{lcr} 0, &1,& 0 \\ 0, &0,& 1 \\ B, &C,& A \end{array} \right),
\label{HfM1}
\end{equation}
where $B = \det D $ and $C$ and $A$ are some parameters (linear combinations of the coefficients of matrix $D$).

This is well-known classical result,  and matrix (\ref{HfM1}) is the transposed Frobenius matrix. Such a matrix depends smoothly on parameters  \cite{Arn71} and the corresponding family was called in \cite{Arn71} as a Silvester family of matrices. Note that form (\ref{HfM1}) can not exist only in inclusive trivial cases, for example, when the linear part in (\ref{M1}) splits coordinates (contains lines  like $\bar x = a x$) or so.

\section{Examples of pseudohyperbolic attractors in the generalized H\'enon maps.}  \label{Ex8Lor}

Even a very superficial study of chaotic dynamics of maps of form (\ref{GHM1-8}) shows a wide variety of strange attractors that exist here. This is also true if to say only on homoclinic attractors, i.e. on strange attractors that contains the fixed point $O(0,0,0)$ (and, thus, the corresponding attracting invariant set coincides with the closure of $W^u(O)$).

However, our main problem is to identify among the plurality of these attractors the so-called genuine strange attractors, which, in the  case under consideration, all belong to the class of pseudohyperbolic attractors.
This problem is essentially simplified, if to take into account only those points $O(0,0,0)$ that, themselves, are pseudohyperbolic.
First of all, it is related to the points that are saddles with $\dim W^u(O)=1$ and $\sigma >1$.

%

Thus, from the point of view of the existence of pseudohyperbolic attractors, see Introduction,  only the domains \textsf{LA}, \textsf{8A}, \textsf{d8A}, and \textsf{s8A}, see Fig.~\ref{fig:8cond1}, remain to be most interesting for us. In this section we present some results on the existence of the corresponding pseudohyperbolic strange attractors in maps of form (\ref{GHM1-8}).

Note that there are many different methods for the study of chaotic dynamics in concrete models. For a two parameter analysis, the most suitable and reasonable approaches
are related
to construction of the charts of dynamical regimes and/or the colored Lyapunov diagrams. Indeed, in \cite{GOST05}, discrete
Lorenz attractors were found using the Lyapunov diagrams for the three-dimensional H\'enon map (\ref{3Hen1}).
For example, such attractors were illustrated for $B=0.7; M_1 = 0.0; M_2 = 0.85$, see Fig.1a in \cite{GOST05}, and for $B=0.7; M_1 = 0.0; M_2 = 0.825$, see Fig.1b in \cite{GOST05}.

\begin{figure}[ht]
\centerline{\epsfig{file=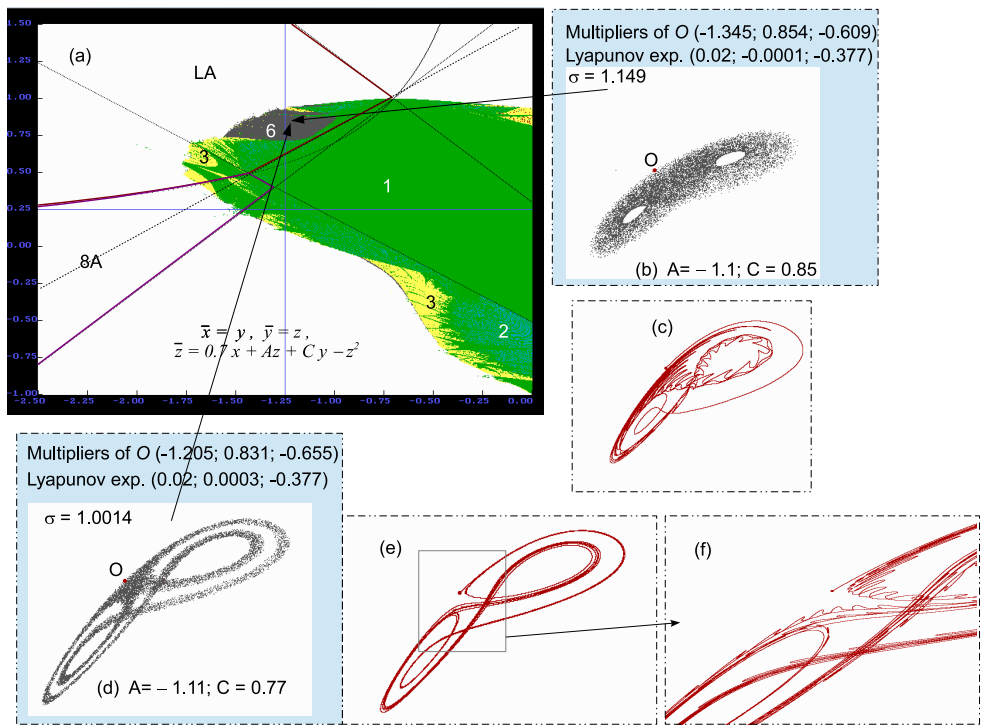, width=14cm
}}
\caption{{\footnotesize Examples of discrete Lorenz attractors in map (\ref{3Hender}) with $B=0.7$: (a) a fragment of the Lyapunov diagram on the (A,C)-parameter plane with a fragment of the saddle chart; (b) and (d) projections of attractors on the $(x,y)$-coordinate plane; (c) and (e) a behavior of one of the unstable separatrices of the fixed point $O$ is shown. Also, an additional information is given: the corresponding values of parameters $A$ and $C$, values of multipliers  and the saddle value of $O$,  and the spectrum of Lyapunov exponents.}}
\label{AL-z2}
\end{figure}

Now we can find such attractors, as they say, ``purposefully''
using the methods developed in the paper. Namely, we consider map (\ref{3Hen1})
in the ``derived'' form
\begin{equation}
 \bar x = y,\; \bar y = z,\; \bar z = B x + Az + Cy - z^2
\label{3Hender}
\end{equation}
and take the ``saddle chart'' for a given fixed $B$, for example, for $B=0.7$,
which is similar to the one as in Figure~\ref{fig:8cond1}.
Further, using the ``saddle chart'', we numerically construct the Lyapunov diagram for orbits starting close to $O(0,0,0)$.
As a result, we obtain the diagram as in Figure~\ref{AL-z2}(a), where, in particular, the dark grey spot 6 intersect the domain \textsf{LA}. This means that for the corresponding values of parameters $A$ and $C$, a discrete Lorenz attractor can be observed.

In particular, projections of such attractors on the $(x,y)$-coordinate plane are shown for $A=-1.1; C= 0.85$ in Fig.~\ref{AL-z2}(b) and for  $A=-1.11; C= 0.77$ in Fig.~\ref{AL-z2}(d), where we point out also values of the multipliers of point $O$, the saddle value of $O$, and the spectra of Lyapunov exponents. In all cases we have that $\sigma>1$ and $\Lambda_1+\Lambda_2>0$. In Figs~\ref{AL-z2}(c) and (e), we show also a behavior of one of the unstable separatrices of the point $O$ (the behavior of another separatrix is symmetric due to the unstable multiplier of $O$ is negative). We see that $W^u(O)$ has in both cases a homoclinic intersection with $W^s(O)$,
although, in the first case, typical zigzags in $W^u$ are seen more clearly than for the second case.

\begin{figure}[ht]
\centerline{\epsfig{file=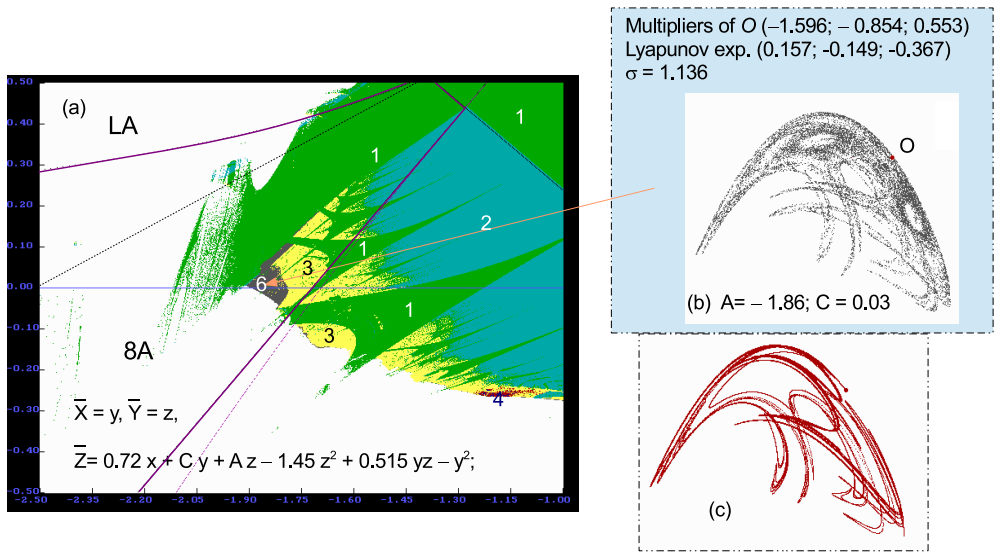, width=14cm
}}
\caption{
{\footnotesize An example of discrete figure-8 attractor in the corresponding generalized H\'enon map: (a) a fragment of the Lyapunov diagram on the (A,C)-parameter plane with a fragment of the saddle chart; (b) the projection of attractor on the $(x,y)$-coordinate plane; (c) a behavior of one of the unstable separatrices of the fixed point $O$ is shown.
%
}
}
\label{8attrnew2}
\end{figure}


The Lyapunov diagram in Figure~\ref{AL-z2} (as well as in the other figures below) is provided in color.
The color codes for the domains are as follows: white -- 
iterations of (all) initial points ``escape to infinity''; green (digit ``1'') -- 
the case of a periodic attractor; light blue (digit ``2'') -- the case of an attractor being a closed invariant curve.
Chaotic attractors can be observed in the yellow (``3''), red (``4''), dark blue (``5'') and dark gray (``6'') domains.
Let $(\Lambda_1, \Lambda_2,\Lambda_3)$ be the spectrum of the Lyapunov exponents of the attractor. Then we assign the colors in the following way: the yellow domains
for the case where $\Lambda_1 >0, \Lambda_2<0,\Lambda_3<0$; the red domains for the case where
$\Lambda_1 >0, \Lambda_2\approx 0,\Lambda_3<0$; the deep blue domains for the case $\Lambda_1 >0, \Lambda_2>0,\Lambda_3<0$.
A special role is played by the dark gray domain -- in this color we denote the case of homoclinic attractors containing the point $O(0,0,0)$.

\begin{figure}[ht]
\centerline{\epsfig{file=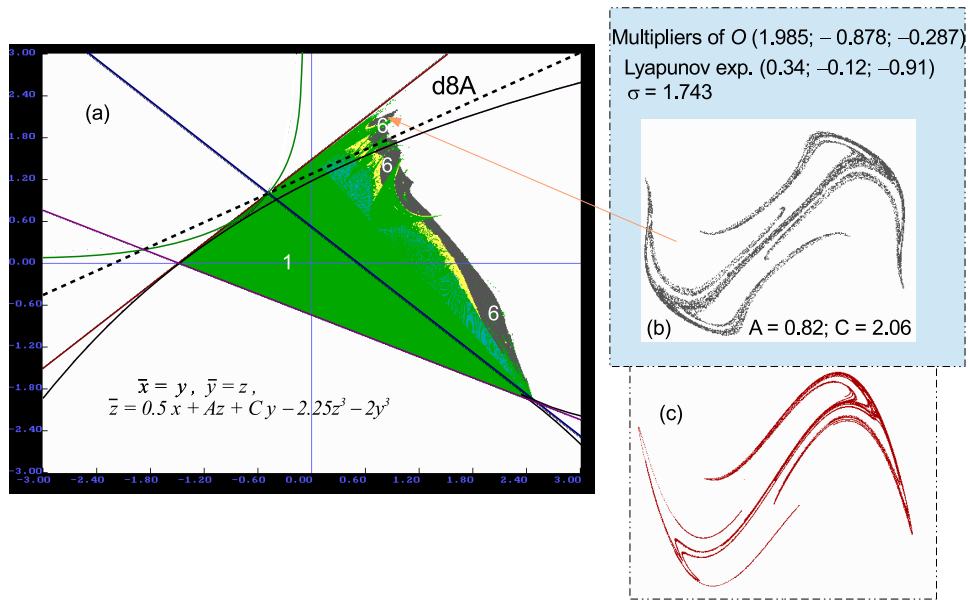, width=14cm
}}
\caption{
{\footnotesize An example of the double figure-8 attractor is shown for the corresponding 3d cubic H\'enon map: (a) a fragment of Lyapunov diagram and saddle chart (we found the attractor in domain 6 of homoclinic chaos lying the \textsf{d8A} region of the saddle chart);
(b) a projection of the attractor on the $(x,y)$-plane; (c) a behavior of one of the unstable separatrices of the fixed point $O$ is shown, other separatrix has the symmetric form due to the cubic map has the central symmetry $x\to-x,y\to-y, z\to-z$.
}
}
\label{d8attr}
\end{figure}

In Figure~\ref{8attrnew2}, the numerical results are provided for the case of the map
$\bar x = y,\; \bar y = z,\; \bar z = 0.72 x + Az + Cy - y^2 + 0.515 yz - 1.45 z^2$. Here one can see that the domain 6 of homoclinic chaos intersects the domain
\textsf{8A}, and, therefore, for certain values of parameters $A$ and $C$  (here $A= -1.86, C= 0.03$)
a discrete figure-8 attractor (Fig.~\ref{8attrnew2}(b)) containing the point $O(0,0,0)$ with multipliers satisfying
condition (\ref{GHM6-8}), is observed. We assume that this attractor is pseudohyperbolic one, since $\Lambda_1 + \Lambda_2 = 0.008 >0$ (but we do not have much confidence in this, since the sum of the first two Lyapunov values is a very small positive value). Note that $W^u(O)$ has a form of a  curve with (infinitely) many zigzags (Fig.~\ref{8attrnew2}(b)) which indicates a possibility of creation of homoclinic tangencies at arbitrarily small perturbations.

\begin{figure}[ht]
\centerline{\epsfig{file=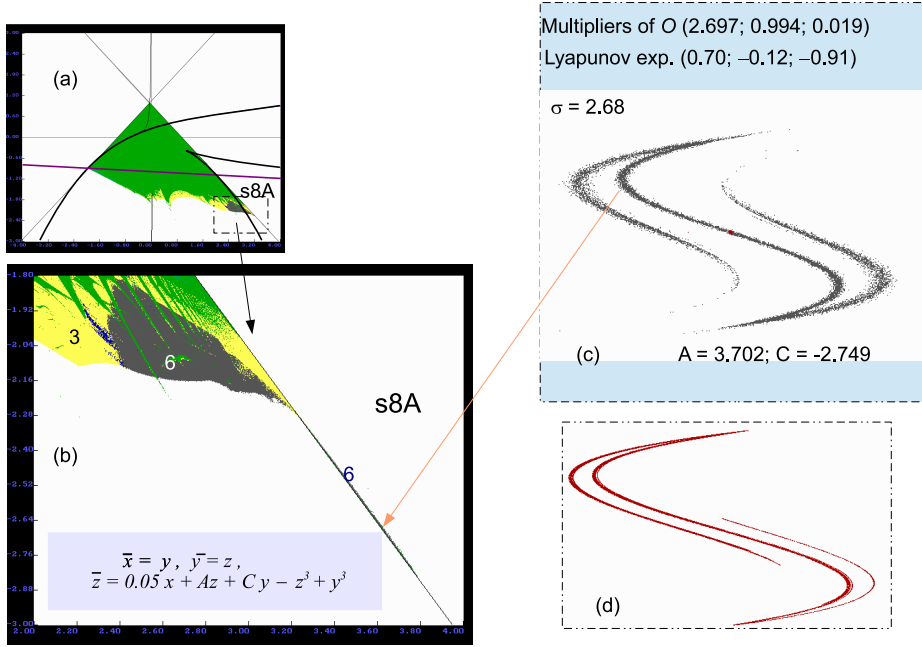, width=14cm
}}
\caption{
{\footnotesize An example of the super figure-8 attractor is shown for the corresponding 3d cubic H\'enon map: (a) and (b) fragments of the Lyapunov diagram and saddle chart;
(c) a projection of the attractor on the $(x,y)$-plane; (c) a behavior of one of the unstable separatrices of the fixed point $O$ is shown.
}
}
\label{sup8b}
\end{figure}

Figure~\ref{d8attr} displays the numerical results for the case of cubic 3d H\'enon map
$\bar x = y,\; \bar y = z,\; \bar z = 0.5 x + Az + Cy - 2 y^3 - 2.25z^3$. Notice that here the domain 6 of homoclinic chaos intersects the domain
\textsf{d8A}, Figure~\ref{d8attr}(a). Hence, for some values of the parameters $A$ and $C$ ($A=0.82; C = 2.06$), a strange attractor
is observed, Figure~\ref{d8attr}(b).
The attractor contains a saddle point with the multipliers $\lambda_1>1, 0> \lambda_2 >\lambda_3>-1$ and the saddle value
$\sigma = |\lambda_1\lambda_2|>1$ -- case P3 of multipliers of $O$. We classify this attractor as {\em discrete double figure-8 attractor}
(compare with Figs.~\ref{8attrnew2} and~\ref{d8attr}).  We consider this attractor as a pseudohyperbolic attractor of new type, it has  $\Lambda_1 + \Lambda_2 \approx 0.22 >0$.  Note that $W^u(O)$ has a form of quite smooth curve at beginning, Fig.~\ref{d8attr}(c), however, certain zigzags appear at its further continuation, Fig.~\ref{d8attr}(d).

One more example of pseudohyperbolic attractor is shown in Fig.~\ref{sup8b} for the case of cubic 3d H\'enon map
$\bar x = y,\; \bar y = z,\; \bar z = 0.05 x + Az + Cy  + y^3 - z^3$. Notice that here the domain 6 of homoclinic chaos is very thin and, nevertheless, it intersects the domain
\textsf{s8A}, see Fig~\ref{sup8b}(a) and its magnification Fig~\ref{sup8b}(b). Then, for some values of parameters $A$ and $C$ ($A = 3.71; C = -2.75$), a strange attractor
is observed, Figuresup8b.
The attractor contains a saddle point with multipliers $\lambda_1>1> \lambda_2 >\lambda_3>0$ and the saddle value
$\sigma = |\lambda_1\lambda_2|>1$ -- case P4 of multipliers of $O$. We classify this attractor as {\em discrete super figure-8 attractor}.
Although this attractor exists for very small (extremely thin) domain of parameters, it demonstrates quite robust properties of  pseudohyperbolicity, e.g. it has $\Lambda_1 + \Lambda_2 = 0.58 >0$.  In Fig.~\ref{sup8b}(d) one of the separatrices of $W^u(O)$ is shown (other separatrix is symmetric due to the central symmetry of the map).

\begin{figure}[ht]
\centerline{\epsfig{file=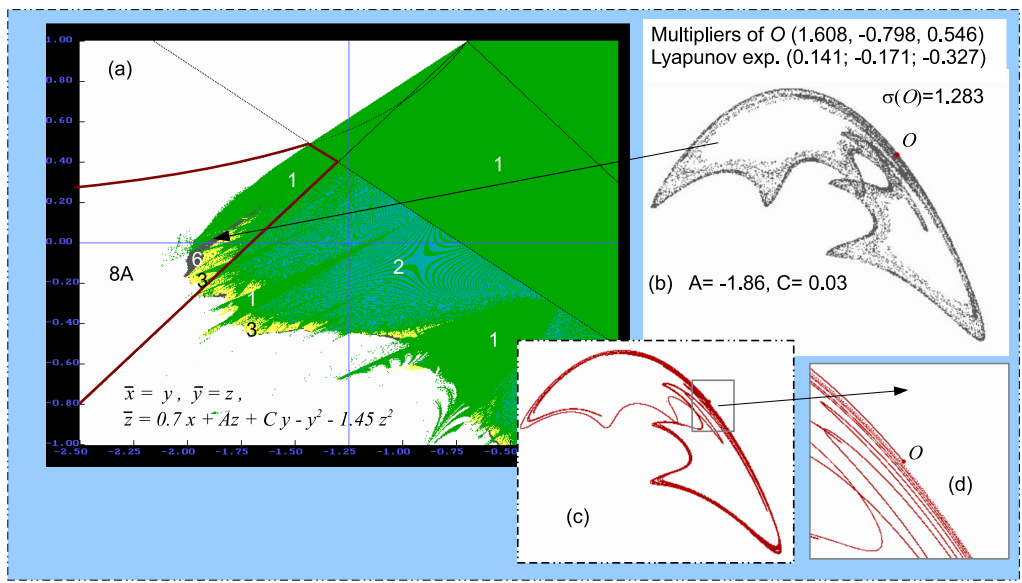, width=14cm
}}
\vspace{-1cm}
\caption{
{\footnotesize An example of 3d H\'enon map  (I) with a discrete figure-8 quasiattractor:
(a) a fragment of the Lyaponov diagram with the saddle chart;
(b) a projection of the attractor on $(x,y)$-plane.
}
}
\label{8quas}
\end{figure}

\section{Examples of quasiattractors in 3d generalized H\'enon maps.}   \label{Exstrattr}

In this section we provide  examples of homoclinic strange attractors which may be interesting, in principle. However, all these attractors are quasiattractors, for one reason or another.
For example, such an attractor is shown in Figure~\ref{8quas} in the case of the 3d H\'enon map  $\bar x = y,\; \bar y = z,\; \bar z = 0.7 x + Az + Cy - y^2 -1.45 z^2$. Here one can see that the domain of homoclinic chaos
intersects the domain
\textsf{8A}, and, therefore, for certain values of the parameters $A$ and $C$  (e.g. $A= -1.86, C= 0.03$),
a discrete figure-8 attractor is observed  containing the point $O(0,0,0)$ with  multipliers satisfying
condition (\ref{GHM6-8}). However, the numerically obtained spectrum of Lyapunov exponents is such that $\Lambda_1 + \Lambda_2 \approx -0.03 < 0$. Thus, the necessary condition (\ref{Lyapcr}) is violated and, hence, the observed discrete figure-8 attractor is, in fact, a quasiattractor.

In Figure~\ref{book-attr}, the results of numerical experiments are shown for the cubic 3d H\'enon map of the form
$\bar x = y,\; \bar y = z,\; \bar z = 0.5 x + Az + Cy - y^3$. Here the chaotic domain intersects
\begin{figure}[ht]
\centerline{\epsfig{file=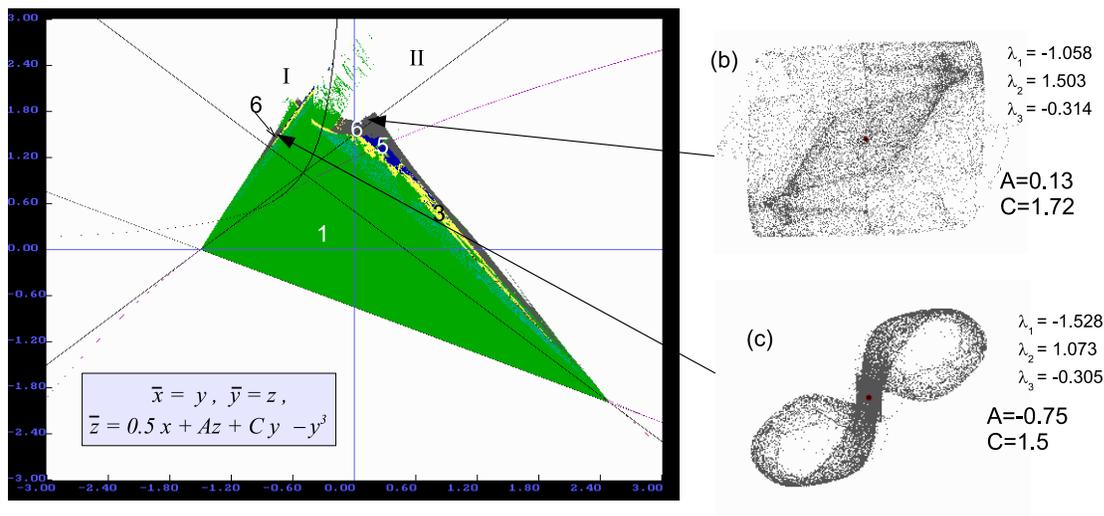, width=16cm
}}
\vspace{-1cm}
\caption{
{\footnotesize (a) The Lyaponov exponents chart with the ``saddle chart'' in the plane of parameters $A$ and $C$ for the corresponding family of cubic 3d
H\'enon maps for $B=0.5$. The values of the multipliers at the saddle and the portraits of homoclinic attractors for
(b) $A= 0.13; C =  1.72$ and (c) $A= -0.75; C =  1.5$.
}
}
\label{book-attr}
\end{figure}
the domains \textsf{I} and \textsf{II}, and,
accordingly, for certain values of
$A$ and $C$, the map possesses homoclinic attractors with saddle fixed points of type
(1,2), i.e. with one-dimensional stable manifold and two-dimensional unstable manifold.
We show two examples of such attractors for (b) $A=0.13, C=1.72$ and
(c) $A=-0.75, C=1.5$.
These attractors have quite different structures
(for example, the first one contains all the three fixed points), but in a neighborhood of
$O(0.0.0)$, their unstable manifolds form infinitely many folds as ``book pages''
(consequently, these attractors could be called as ``book-attractors''; as far as the authors know these attractors has not gotten any name yet).
We attribute these attractors to the class of quasiattractors, however, for other reasons than in the previous case, namely, because of ``multi-folding'' the two-dimensional unstable manifold of $O$, see the Introduction.

\subsection{On discrete spiral attractors in three-dimensional maps.} \label{spirtype}

In this section we provide two examples of discrete homoclinic attractors of spiral type. They are also quasiattractors, since contain the saddle-focus fixed point $O$.
Note that spiral attractors have always special interest beginning from papers by Shilnikov \cite{Sh65,Sh70,Sh86} who discovered the spiral chaos and by Arneodo, Coullet, Tresser \cite{ACT81a,ACT85} who have constructed first examples of systems with strange spiral attractors. We give two different examples of spiral chaos for three-dimensional maps, see Figs.~\ref{spir1} and~\ref{shilattr}.

Fig.~\ref{spir1} illustrates numerical results for the cubic 3d H\'enon map (\ref{GHM1-8}) with
$B=0.5$ and $\tilde f = 0.5 z^3 - 6 zy^2 + 0.5 y^3$. One can see that the dark grey domain 6 of the Lyapunov diagram intersects
the \textsf{``spiral-point''} domain of the saddle chart of point $O$, see Fig.~\ref{spir1}(a). Thus, for certain values of $A$ and $C$ ($A=2.13; C = - 1.29$ in the figure) the map has a spiral attractor
with a saddle-focus (2,1) fixed point $O$, see Fig.~\ref{spir1}(b). A behavior of $W^u(O)$ is illustrated in Fig.~\ref{spir1}(c).

\begin{figure}
\centerline{\epsfig{file=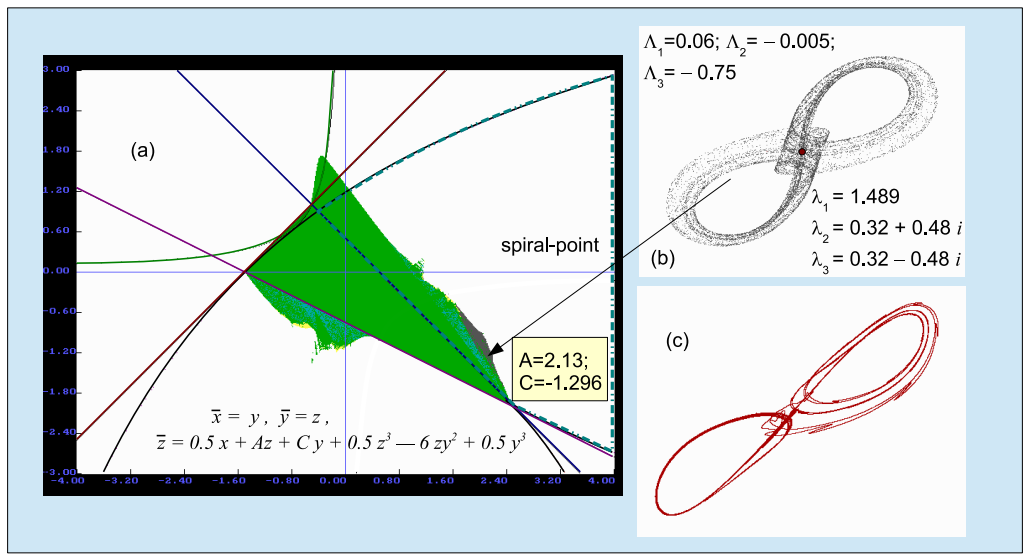, width=14cm
}}
\vspace{-1cm}
\caption{
{\footnotesize (a) The Lyaponov exponents chart with the ``saddle chart'' in the plane of parameters $A$ and $C$ for the corresponding family of cubic 3d
H\'enon maps for $B=0.5$. (b) The values of the multipliers at the saddle and  (c) the portraits of homoclinic attractors for $A=2.13; C = - 1.29$.
}
}
\label{spir1}
\end{figure}

We notice that  two universal and simple scenarios for the onset of chaos
(from a stable fixed point to a strange attractor containing this fixed point) in multidimensional maps were proposed in
\cite{GGS12}, see also \cite{GGKT14}. The common element in both these scenarios is that when a parameter changes the stable fixed point loses the stability
and becomes saddle (but does not belong to the  attractor at the beginning), next, its stable and unstable invariant manifolds intersect, and, eventually,
an attracting set arises containing the saddle fixed point and its unstable manifold. The first scenario of the emergence of the  discrete Lorenz attractor or discrete figure-8 attractor was briefly described in the Introduction, see Fig.~\ref{scen2}.

\begin{figure}[ht]
\centerline{\epsfig{file=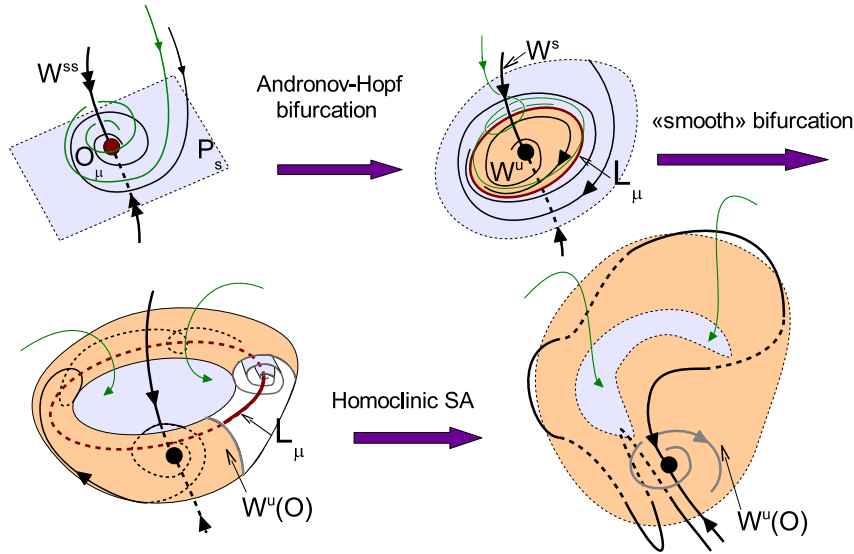, width=14cm
}}
\vspace{-1cm}
\caption{{\footnotesize A sketch of scenario of the emergence of a discrete Shilnikov attractor.}} \label{shildiscr}
\end{figure}

The second scenario
is a scenario of the emergence of a \emph{discrete Shilnikov attractor}.
A sketch of such a scenario for one-parameter families $T_\mu$ of three-dimensional dif\-feo\-mor\-phisms is illustrated in Fig.~\ref{scen2}. 
%
Here the stable fixed point $O_\mu$ loses the stability at $\mu=\mu_1$ via a supercritical (soft) Andronov-Hopf bifurcation:  for
$\mu>\mu_1$ the point becomes a saddle-focus (1,2), i.e. it has the one-dimensional stable and two-dimensional unstable invariant manifolds, and a stable closed invariant curve
$L_\mu$ is born in a neighborhood of the point. Thus,
the curve $L_\mu$ becomes attractor.  Just after the bifurcation, the unstable manifold of $O$ is two-dimensional disk with the boundary $L_\mu$. At further changing $\mu$, the so-called ``smooth'' bifurcation with $L_\mu$ can be happened -- the curve $L_\mu$ becomes of focal type and $W^u(O)$ begins to develop on this curve. Next, the curve
loses the stability, no matter what way. However, it is important that the stable and unstable invariant manifolds of the saddle-focus can intersect and a strange homoclinic attractor can arise containing the saddle-focus
$O_\mu$ and its two-dimensional unstable manifold. We call this attractor as a {\em discrete Shilnikov attractor}.
Notice that a similar scenario of the appearance of a spiral attractor for three-dimensional flows was considered in the paper \cite{Sh86} by L.P.~Shilnikov.

Various realizations of the discrete Shilnikov scenario was considered in \cite{GGKT14}. In the present paper we show only one example of the discrete Shilnikov attractor observed in map (\ref{GHM1-8}) with $\tilde f = - y^2$, see Fig.~\ref{shilattr}. Here the dark grey domain 6 of Lyapunov diagram intersects the \textsf{``Shilnikov-point''} domain of the saddle chart of point $O$,
and, consequently, for some values of $A$ and $C$ ($A=1.43; C = - 1.84$), there is a discrete Shilnikov spiral attractor with a saddle-focus (1,2) fixed point $O$. In this case $W^u(O)$ and $W^s(O)$ intersect that is illustrated by Fig.~\ref{shilattr}(c), where a behavior of one of separatrices of $W^s(O)$ is shown.

\begin{figure}[ht]
\centerline{\epsfig{file=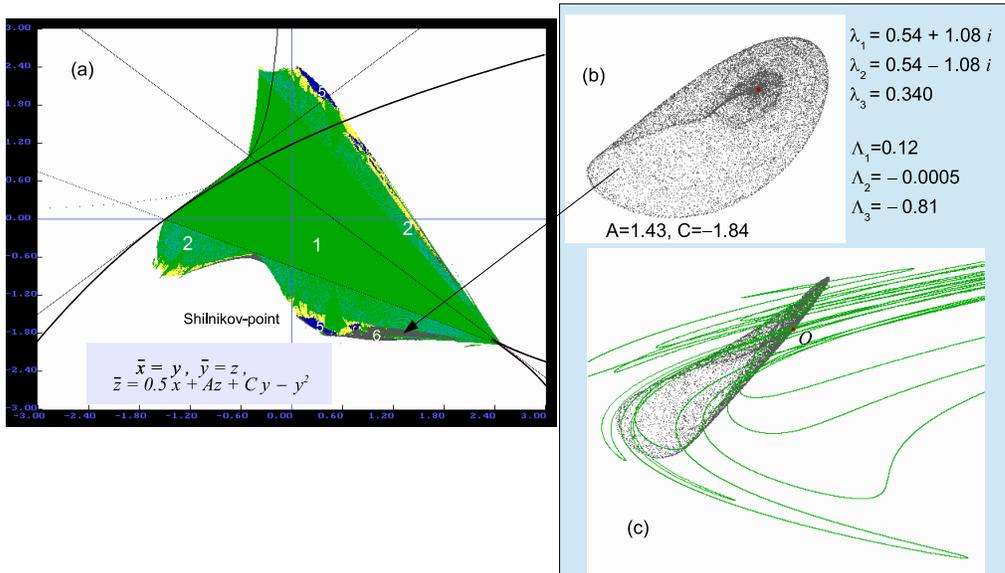, width=14cm
}}
\vspace{-1cm}
\caption{
{\footnotesize (a) The Lyaponov diagram and the saddle chart of $O$ in the plane of parameters $A$ and $C$
for the corresponding family of 3d
H\'enon maps for $B=0.5$.
(b) The projection of attractor onto $(x,y)$-coordinate plane. (c) Homoclinic behavior of one of separatrices of the saddle-focus $O(0,0,0)$ is illustrated (we take here another projection comparing with (b)).
}
}
\label{shilattr}
\end{figure}

In general, the theme of discrete spiral attractors of multidimensional maps seems very interesting and relevant, and we plan to continue appropriate researches in the nearest time.

\subsection{The case $B=0$.} \label{HenType}

We note that the picture of the saddle chart of the point $O$ in the case of map (\ref{GHM1-8})  depends essentially on the value of the Jacobian $B$.
Figure~\ref{fig:8cond1} represents the case $B=0.5$ as well as Figure~\ref{fig:8cond1(0p0)} displays the ``limit'' case $B=0$.
As we can see, there is a huge difference between them. For example, the ``spiral-point'' (and some other) domain disappears for $B=0$.

\begin{figure}[ht]
\centerline{\epsfig{file=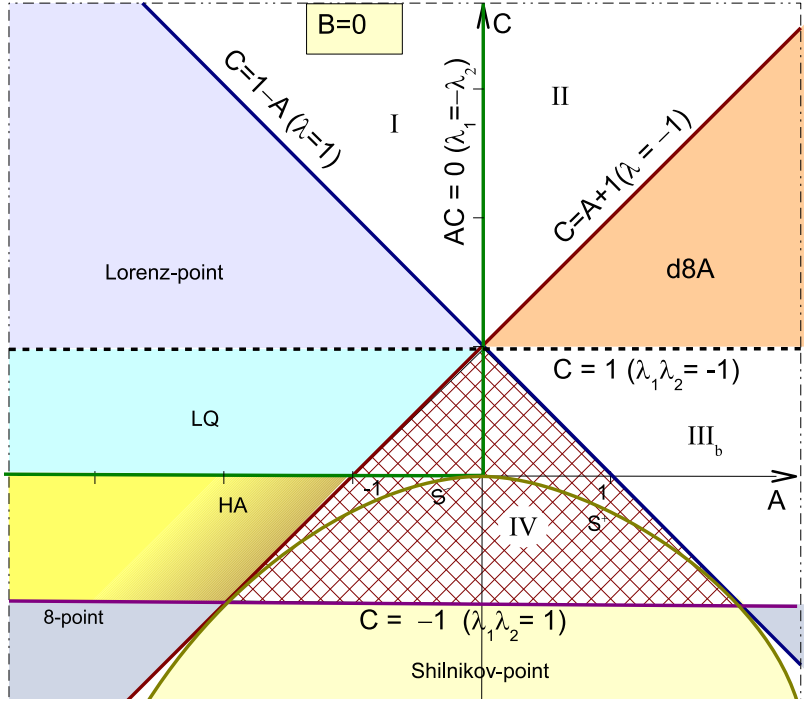, width=12cm
}}
\vspace{-1cm}
\caption{
{\footnotesize The case of two-dimensional maps, $B=0$.  }}
\label{fig:8cond1(0p0)}
\end{figure}

Notice that the case $B=0$ is interesting in itself since the map is reduced to the two-dimensional family of the form
\begin{equation}
 \bar y = z,\; \bar z = Az + Cy + \tilde f(y,z)
\label{2Dmaps}
\end{equation}

\begin{figure}
\centerline{\epsfig{file=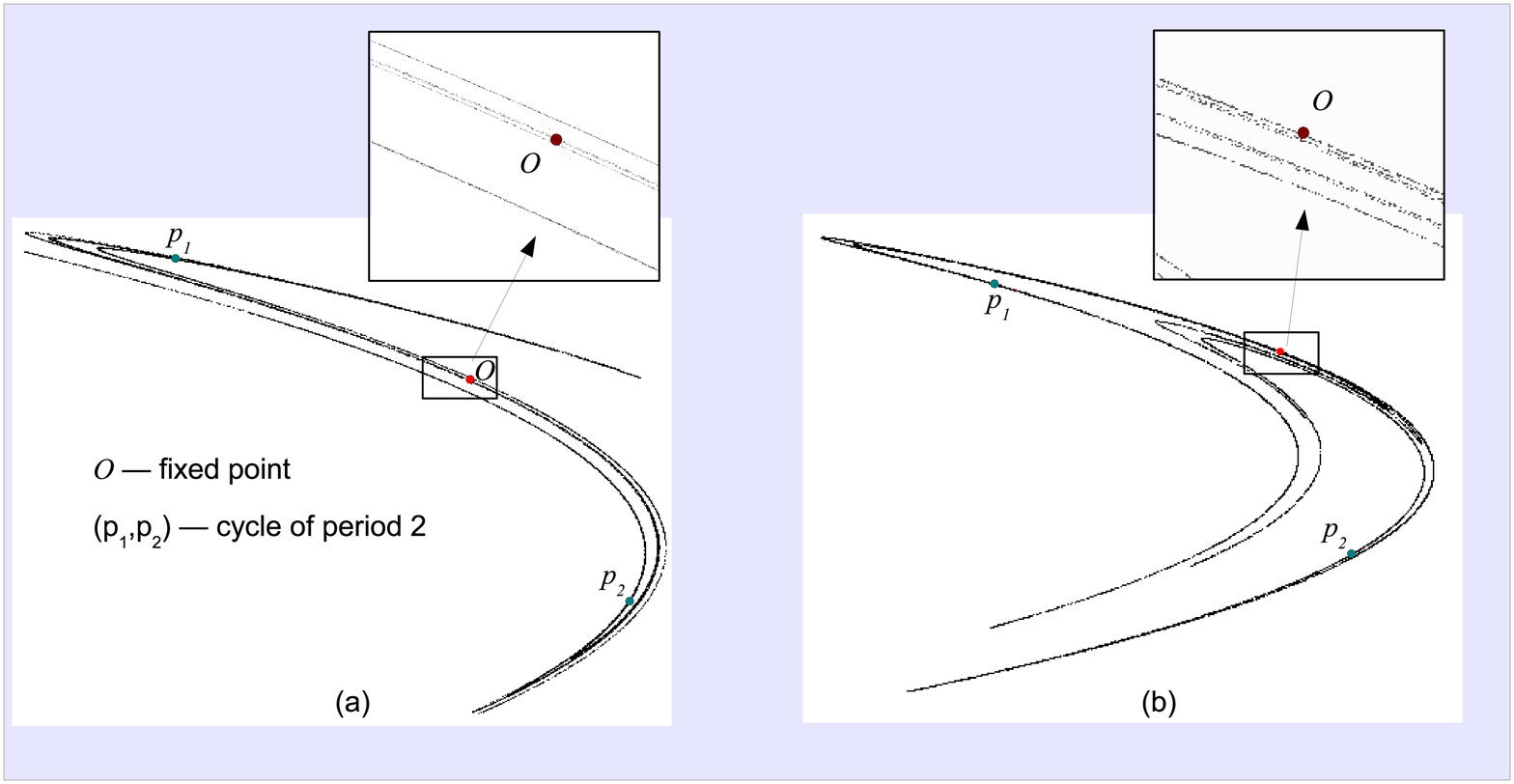, width=14cm
}}
\caption{{\footnotesize Quasiattractors in the two-dimensional H\'enon map (\ref{Henmap2}): (а) H\'enon attractor for $A = -1.92, C = -0.4$;
(b) Lorenz quasiattractor for $A = -1.767, C = 0.3  $.}}
\label{fig:HAandLQ}
\end{figure}

In particular, the H\'enon map,  written in the standard form
\begin{equation}
 \bar y = z,\; \bar z = M + Cy  -z^2,
\label{Henmap}
\end{equation}
can be represented in form (\ref{2Dmaps}) if $D = (C-1)^2 + 4M >0$. In this case map (\ref{Henmap}) has two fixed points with  coordinates $\displaystyle y^\pm =z^\pm = \frac{1}{2}\left(C-1 \pm \sqrt{D}\right)$.
After the shift $y_{new}=y-y^+, z_{new}=z-z^+$ the fixed point
$M^+ = (y^+,z^+)$ is brought into the origin and the map (\ref{Henmap}) takes the form
\begin{equation}
 \bar y = z,\; \bar z = Az + Cy  -z^2,
\label{Henmap2}
\end{equation}
where $A = -2 y^+$. The Jacobian of map (\ref{Henmap2}) is $-C$ and, thus, since the H\'enon map is a diffeomorphism,  attractors can occur only for $|C|<1$. Hence, if $M>0$ (it is the case when map (\ref{Henmap}) can possess an interesting (including chaotic) dynamics) we get $A<0$. Evidently, the map (\ref{Henmap2}) is a member of family (\ref{2Dmaps}), being $\tilde f(y,z) = - z^2$.
From Figure~\ref{fig:8cond1(0p0)} one can see that in the domain $\{A<0;|C|<1\}$ map (\ref{Henmap2}) can have, in our terminology, either a discrete figure-8 quasiattractor
(for $-1<C<0$) or a discrete Lorenz quasiattractor (for $0<C<1$). Examples of such attractors are shown in Figure~\ref{fig:HAandLQ}.

Note that the attractor of Fig.~\ref{fig:HAandLQ}(b) is the same famous H\'enon attractor that was proposed in the paper \cite{H76} by M. H\'enon (the H\'enon's values of parameters correspond to $C=0.3$ and $M=1.4$ for map (\ref{Henmap}) or $C=0.3$ and $A = 0.7 -\sqrt{6.09}\approx -1.767$ for map (\ref{Henmap2})). Thus, M. H\'enon was right when he wrote in \cite{H76} ``We present here a ``reductionist'' approach in which we try to find a model problem which is as simple as possible, yet exhibits the same essential properties as the Lorenz system''.

\section{Conclusion.}  \label{sec:concl}

The paper is devoted to the study of chaotic dynamics of multidimensional systems which include flows with $\dim\geq 4$ and diffeomorphisms with $\dim\geq 3$. We considered three-dimensional diffeomorphisms, and specifically, the three-dimensional generalized H\'enon maps. Note that even in such a setting, the problem of a complete study of strange attractors is quite unrealistic. Therefore, we have restricted this problem to the partial problem of the study of
strange attractors that contain only one fixed point of the map. Moreover, we paid main attention to the so-called pseudohyperbolic strange attractors, which are genuine strange attractors in that sense that each orbit of attractor has a positive maximal Lyapunov exponents and this property is robust, i.e. it holds for all close maps. Then our preliminary considerations suggested that, in principle, three-dimensional maps may have only 5 different types of strange  pseudohyperbolic attractors containing only one fixed point. We call these attractors the discrete Lorenz, discrete figure-8, discrete double-figure-8, discrete super-figure-8, and discrete super-Lorenz attractors. The first four types of attractors were found in the paper. Unfortunately, attractors of the last type,  super-Lorenz ones,
were not found
-- probably because the Henon-like maps do not allow  Lorenz-like symmetries.

We show numerically that our attractors are wild pseudohyperbolic. The proof of ``wildness'' is direct and standard: we construct numerically the one-dimensional stable or unstable manifold of $O$ (for the two-dimensional manifold we take its linear approximation), and look for creation of homoclinic tangencies at varying a parameter. Checking pseudohyperbolic conditions is more delicate. In fact, we prove only that the main condition (\ref{Lyapcr}) holds for the numerically obtained Lyapunov exponents.
However, there exist powerful computer methods, see e.g. \cite{Tucker99,Tucker11,FTV13},
to give the so-called ``computer assisted proof'' of the corresponding results and we hope, with common effort, to get these proofs.
After this we would be able to claim that ``the discrete Lorenz attractor exists'', and also some similarly for ``figure-8'' attractors.

We should notice that our paper continues the study of dynamics of three-dimensional H\'enon-like maps started in a circle of works \cite{GOST05,GGS12,GGOT13,GGKT14}. The possibility of birth of discrete Lorenz attractors at local bifurcations was established in \cite{SST93}; the first example of such attractor
was found in \cite{GOST05}; scenaria of emergence of the discrete Shilnikov and Lorenz attractors in multidimensional maps were described and discussed in \cite{Sh86,SST93,GGS12,GGKT14}; conditions of existence of the discrete Lorenz attractors were derived in \cite{SST93,GOST05,GGOT13} as well as various their examples were found \cite{GGOT13}. We would like to note papers \cite{GGK13,GG15,Kaz14}, where  the discrete Lorenz and figure-8 attractors were discovered in systems of rigid body dynamics. We note also relevant papers \cite{Sat05,TS08} in which important results on pseudohyperbolic properties of periodically perturbed systems with the Lorenz attractor were obtain. In particular, it was shown in \cite{Sat05} that periodic sinks are not created at small such perturbations, and it was proved in \cite{TS08} that the ``discrete super-Lorenz attractor'' (an attractor in the Poincar\'e map) is pseudohyperbolic if the perturbation is sufficiently small.

We hope that our results can  be of interest not only for mathematicians dealing with chaotic dynamics, but also for specialists  from other fields of science.\\

{\bf Acknowledgments.} The authors thank J. Figueras, M. Gonchenko, W. Tucker and D. Turaev for very useful remarks. The paper is partially supported by the grants of RFBR 16-01-00324 and 14-01-00344 and the
RSciF grant No. 14-41-00044. The numerics experiments (in Sections~\ref{Ex8Lor} and~\ref{Exstrattr}) were carried out by the RSciF grant No. 14-12-00811.
AG is supported also by the  basic financial program of Russian Federation Ministry of Education and Science.

\end{document}